\newtheorem{thm}{Theorem}
\newtheorem{conj}{Conjecture}
\newtheorem{cor}[thm]{Corollary}
\newtheorem{theorem}{Theorem}[section]
\newtheorem{lemma}[theorem]{Lemma}
\newtheorem{proposition}[theorem]{Proposition}
\theoremstyle{definition}
\newtheorem{definition}[theorem]{Definition}
\def\R{\mathbb{R}}
\def\Z{\mathbb{Z}}
\def\F{\mathbb{F}}
\def\HF {\widehat{\operatorname{HF}}}
\def\CFK {\widehat{\operatorname{CFK}}}
\def\HFK {\widehat{\operatorname{HFK}}}
\def\HFL {\widehat{\operatorname{HFL}}}
\def\CFD {\widehat{\operatorname{CFD}}}
\def\CFA {\widehat{\operatorname{CFA}}}
\def\CFAA {\widehat{\operatorname{CFAA}}}
\def\CFDD {\widehat{\operatorname{CFDD}}}
\def\CFAD {\widehat{\operatorname{CFAD}}}
\def\CFDA {\widehat{\operatorname{CFDA}}}
\def\CFKm {\operatorname{CFK}^-}
\def\Kappa {\mathrm{K}}
\def\Mu {\mathrm{M}}
\def\AA {\mathcal{A}}
\def\BB {\mathcal{B}}
\def\FF {\mathcal{F}}
\def\II {\mathcal{I}}
\def\I{\mathbb{I}}
\def\RR{\mathfrak{R}}
\def\SS{\mathfrak{S}}
\newcommand{\abs}[1] {\left\lvert #1 \right\rvert}
\def\Th{^{\text{th}}}
\def\minus{\smallsetminus}
\def\co{\colon\thinspace}
\def\one{\bm{1}}
\DeclareMathOperator{\im}{im} \DeclareMathOperator{\id}{id} \DeclareMathOperator{\rank}{rank}
  \DeclareMathOperator{\pt}{pt}
\DeclareMathOperator{\Span}{Span} 
 \DeclareMathOperator{\gr}{gr}
\DeclareMathOperator{\first}{first} \DeclareMathOperator{\last}{last}
\numberwithin{equation}{section}
\begin{document}

\title{Splicing knot complements and bordered Floer homology}

\author{Matthew Hedden}
\address{Department of Mathematics \\ Michigan State University \\ 619 Red Cedar Road \\ East Lansing, MI 48824}
\email{mhedden@math.msu.edu}

\author{Adam Simon Levine}
\address{Department of Mathematics \\ Princeton University \\ Fine Hall, Washington Road \\ Princeton, NJ 08544}
\email{asl2@math.princeton.edu}

\thanks{Matthew Hedden gratefully acknowledges support from NSF grant DMS-0906258, NSF CAREER grant DMS-1150872, and an Alfred P. Sloan Research Fellowship. Adam Simon Levine was partially supported by NSF Postdoctoral Research Fellowship grant DMS-1004622.}

\begin{abstract}
We show that the integer homology sphere obtained by splicing two nontrivial knot complements in integer homology sphere L-spaces has Heegaard Floer homology of rank strictly greater than one.  In particular, splicing the complements of nontrivial knots in the $3$-sphere never produces an L-space. The proof uses bordered Floer homology.
\end{abstract}

\maketitle

\section{Introduction}

A rational homology $3$-sphere $Y$ is called an \emph{L-space} if the rank of its Heegaard Floer homology group $\HF(Y)$ equals the order of $H_1(Y;\Z)$. Examples of L-spaces include $S^3$, lens spaces, all manifolds with finite fundamental group \cite[Proposition 2.3]{OSzLens}, and the branched double covers of alternating (or, more generally, quasi-alternating) links in $S^3$ \cite[Proposition 3.3]{OSzDouble}. Since the rank of $\HF(Y)$ is always greater than or equal to $\abs{H_1(Y;\Z)}$ \cite[Proposition 5.1]{OSzProperties}, L-spaces are the manifolds with the smallest possible Heegaard Floer homology, and it is natural to ask for a complete classification of L-spaces or a more topological characterization \cite[Question 11]{OSzDiagrams}. The following conjecture, first raised by Ozsv\'ath and Szab\'o \cite[p.~40]{OSzLectures}, is of central importance to Heegaard Floer theory:

\begin{conj} \label{conj:S3}
If $Y$ is an irreducible homology sphere that is an L-space, then $Y$ is homeomorphic to either $S^3$ or the Poincar\'e homology sphere. \end{conj}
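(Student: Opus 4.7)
The plan is to attack Conjecture~\ref{conj:S3} via the geometric decomposition of $Y$. By Geometrization, an irreducible integer homology sphere falls into one of three classes: Seifert fibered, toroidal but not Seifert fibered, or closed hyperbolic. I would handle these three classes separately, expecting the toroidal case to be amenable to the methods of the present paper and the hyperbolic case to be the true obstacle.

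For the Seifert fibered case, any integer homology sphere that is Seifert fibered has the form $\Sigma(a_1,\dots,a_n)$ for pairwise coprime integers $a_i\ge 2$. The Heegaard Floer homology of such manifolds is computable from a plumbing description via the work of Ozsv\'ath--Szab\'o (and more generally N\'emethi) on plumbed three-manifolds. A direct case analysis shows that $\HF$ has rank one exactly for $S^3$ and $\Sigma(2,3,5)$, so this case is essentially known.

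For the toroidal case, $Y$ admits a nontrivial JSJ decomposition along incompressible tori, and the strategy is to generalize the main theorem of this paper (which treats splicing of two knot complements in homology sphere L-spaces) to arbitrary such decompositions. The natural tool is bordered Heegaard Floer homology and its pairing theorem, which recovers $\HF(Y)$ as a derived tensor product of bordered invariants $\CFD$ and $\CFA$ of the pieces. The plan is an induction on the JSJ complexity: given a decomposition along a torus, one would control the bordered invariant of each side (a knot or link complement in some homology sphere) well enough to force the tensor product to have rank at least two. The main obstacle is handling pieces that are not themselves L-space-like; outside of the L-space knot regime, the structure of $\CFD$ for knot complements in non-L-space ambient manifolds is far less rigid, and new algebraic input (perhaps a general ``$\iota$-complex'' style decomposition of $\CFD$ for arbitrary knots in homology spheres) would be required.

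The hyperbolic case is expected to be the most difficult, and no Floer-theoretic obstruction is presently known to rule out a closed hyperbolic homology sphere being an L-space. A natural route is via the L-space conjecture, which predicts that an irreducible rational homology sphere is an L-space if and only if it admits no co-orientable taut foliation, equivalently if and only if its fundamental group is not left-orderable. Proving that every closed hyperbolic integer homology sphere admits a co-orientable taut foliation, or has a left-orderable fundamental group, would complete the argument. Both statements are wide open; progress would likely require combining deep results from $3$-manifold topology such as Agol's virtual fibering theorem with foliation-extension techniques, or exploiting representations into $\widetilde{\mathrm{PSL}_2(\R)}$ arising from the hyperbolic structure. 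This is where I expect the real work to lie, and where the present bordered approach does not obviously help.
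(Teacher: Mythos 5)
The statement you are asked about is a conjecture, not a theorem: the paper does not prove Conjecture~\ref{conj:S3}, and indeed states it as a central open problem, proving only a special case of the related Conjecture~\ref{conj:torus} (splicing two nontrivial knot complements in integer homology sphere L-spaces never yields an L-space). So there is no proof in the paper to compare against, and your proposal, by its own admission, is a research program rather than a proof.

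As an argument, the gap is concrete and twofold. First, in the toroidal case your induction on JSJ complexity requires controlling $\CFD$ of knot complements in ambient homology spheres that are \emph{not} L-spaces, and the paper's final section shows exactly why the present method breaks there: the unstable chains in $\CFD(X_K)$ are governed by a chain homotopy equivalence $J\co (C^h,\partial^h)\to(C^v,\partial^v)$ that is not determined by (and need not respect the filtration of) $\CFKm(Y,K)$, and the explicit example of the core of $+1$ surgery on the left-handed trefoil violates the analogues of Propositions~\ref{prop:BK-D} and~\ref{prop:BK-A}, so the extremal subspace $B_{K_1}\otimes B_{K_2}$ need not survive in homology. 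Your suggestion that some new structural input would be needed is correct, but naming the need is not supplying it. Second, the hyperbolic case is, as you say, completely open, and neither the L-space conjecture, taut foliation constructions, nor left-orderability results are established in the generality you would need; invoking them is conditional, not a proof. Only the Seifert fibered case rests on established results (Ozsv\'ath--Szab\'o, N\'emethi, and the paper's cited reference for Seifert fibered L-spaces), so your proposal correctly maps the landscape but does not close either of the two remaining cases.
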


Thus, the conjecture asserts that the classification of L-spaces with the singular homology of the $3$-sphere is extremely simple: they are simply the connected sums of zero or more copies of the Poincar\'e sphere (with either orientation).
Conjecture \ref{conj:S3} is known to hold for manifolds obtained by Dehn surgery on knots in $S^3$ \cite[Proof of Corollary 1.3]{OSzGenus}, \cite[Proof of Corollary 1.5]{GhigginiFibred} and for all Seifert fibered spaces \cite{RustamovLspaces}. In light of the Geometrization Theorem \cite{Perelman1,Perelman2,MorganTian}, one should consider how Heegaard Floer homology behaves under the operation of gluing along incompressible tori. The following conjecture would reduce Conjecture \ref{conj:S3} to the case of hyperbolic $3$-manifolds:

\begin{conj} \label{conj:torus}
If $Y$ is an irreducible homology sphere that contains an incompressible torus, then $Y$ is not an L-space.
\end{conj}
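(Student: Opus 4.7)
The plan is to use bordered Floer homology, in parallel with the main result of this paper but without the L-space hypothesis on the two pieces. Since $Y$ is an integer homology sphere, the incompressible torus $T$ is separating and decomposes $Y = Y_1 \cup_T Y_2$ into two compact, irreducible $3$-manifolds with incompressible torus boundary, neither a solid torus. The pairing theorem gives $\CF(Y) \simeq \CFA(Y_1) \boxtimes \CFD(Y_2)$, so it suffices to show that no such tensor product has total homological rank equal to $1$.

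The key structural input I would try to establish is a nontriviality statement for $\CFD(M)$ of any irreducible $M$ with incompressible torus boundary that is not a solid torus: with respect to any parametrization of $\partial M$, the Type $D$ module must contain summands that force the box tensor product with $\CFA(M')$ to have homology of rank at least $2$ whenever $M'$ is of the same type. The natural organizing invariant is the set $\mathcal{L}(M) \subset \Q \cup \{\infty\}$ of L-space filling slopes, which for such $M$ is expected to be a proper (possibly empty) subinterval. The integer-homology-sphere condition on $Y$ imposes a precise matching of slopes under the gluing, and I would try to show that this matching cannot simultaneously lie in the interiors of $\mathcal{L}(Y_1)$ and $\mathcal{L}(Y_2)$ without contradicting incompressibility of $T$ or irreducibility of $Y$.

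The main obstacle is precisely what restricts the present paper to the L-space case on each side. When $M$ admits an L-space filling, the decomposition of $\CFD(M)$ into stable and unstable chains gives enough control to run the argument explicitly. For general $M$ with incompressible torus boundary, no such clean structural description is available, and the bordered invariant can be arbitrarily complicated. To attack the full conjecture I would induct on the JSJ complexity of $Y$: by Geometrization, each piece is Seifert fibered or hyperbolic, and Seifert fibered pieces should be accessible via Rustamov's classification together with direct bordered computations. The hyperbolic case is the real difficulty and would probably require substantially new input---for instance, a version of the argument using twisted coefficients, a contact-geometric characterization of slopes that kill $\HF$-rank, or a direct structural theorem for $\CFD$ of hyperbolic manifolds with torus boundary. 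Concretely, the hardest single subproblem is to rule out that differentials in $\CFA(Y_1) \boxtimes \CFD(Y_2)$ conspire to cancel all but one generator without forcing at least one of the two sides to admit an L-space Dehn filling along the gluing slope; controlling this cancellation is where I expect most of the work to lie.
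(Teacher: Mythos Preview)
The statement you are attempting to prove is Conjecture~\ref{conj:torus}, which the paper does \emph{not} prove; it is presented as an open conjecture, and the paper establishes only the special case where the two knot complements arise from knots in L-space homology spheres (Theorem~\ref{thm:main}). There is therefore no proof in the paper to compare against.

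Your proposal is not a proof either, and you are forthright about this: the text is a program rather than an argument, with the key steps phrased as things you ``would try to establish'' or that ``would probably require substantially new input.'' That is the honest assessment of the situation. Your identification of the main obstacle is accurate and in fact matches the paper's own discussion in the Future Directions section: when the ambient homology sphere is not an L-space, the description of $\CFD(X_K)$ acquires multiple unstable chains whose structure depends on a chain homotopy equivalence $J\colon (C^h,\partial^h)\to (C^v,\partial^v)$ that is not determined by $\CFKm(Y,K)$ alone and need not be filtered. The paper gives an explicit example (the core of $+1$ surgery on the left-handed trefoil) showing that Propositions~\ref{prop:BK-D}--\ref{prop:VK-A} fail in this generality, so the subspace $B_{K_1}\otimes B_{K_2}$ need not survive in homology. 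This is precisely the ``conspiring differentials'' phenomenon you flag at the end.

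In short: there is no gap to name in your argument because there is no argument, only a correctly diagnosed research problem. The conjecture remains open in the paper, and your outline does not close it.
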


\noindent The purpose of this paper is to prove a special case of Conjecture \ref{conj:torus}.

To describe our result, let the exterior of a knot $K$ in a homology sphere $Y$ be denoted by $X_K$. The meridian and Seifert longitude of $K$, viewed as curves in $\partial X_K$, are respectively denoted $\mu_K$ and $\lambda_K$. Given knots $K_1 \subset Y_1$ and $K_2 \subset Y_2$, let $Y(K_1, K_2)$ denote the manifold obtained by gluing $X_{K_1}$ and $X_{K_2}$ via an orientation-reversing diffeomorphism $\phi\co \partial X_{K_1} \to \partial X_{K_2}$ taking $\lambda_{K_1}$ to $\mu_{K_2}$ and $\lambda_{K_2}$ to $\mu_{K_1}$. We say that $Y(K_1, K_2)$ is obtained by \emph{splicing} the knot complements $X_{K_1}$ and $X_{K_2}$. The Mayer--Vietoris sequence shows that $Y(K_1,K_2)$ is a homology sphere. The image of $\partial X_{K_1}$ is incompressible in $Y(K_1, K_2)$ if and only if the knots $K_1$ and $K_2$ are both nontrivial. Furthermore, a separating torus $T$  in a homology sphere $Y$ canonically determines a decomposition $Y = Y(K_1, K_2)$: if $Y = X_1 \cup_T X_2$, we obtain $Y_1$ (resp.~$Y_2$) by Dehn filling $X_1$ (resp.~$X_2$) along the unique slope in $T$ that bounds a surface in $Y_2$ (resp.~$Y_1$), and we let $K_1$ (resp.~$K_2$) be the core of the glued-in solid torus.

The main result of this paper is the following:

\begin{thm} \label{thm:main}
Let $Y_1$ and $Y_2$ be L-space homology spheres, and let $K_1 \subset Y_1$ and $K_2 \subset Y_2$ be nontrivial knots.
Then $\dim \HF(Y(K_1, K_2)) > 1$.
\end{thm}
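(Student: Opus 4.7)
The plan is to use the pairing theorem of bordered Heegaard Floer homology. Parametrizing each boundary $\partial X_{K_i}$ by the pair $(\mu_i, \lambda_i)$, the splicing identification $\mu_1 \leftrightarrow \lambda_2$, $\lambda_1 \leftrightarrow \mu_2$ is realized by an explicit torus diffeomorphism $\phi$, and the pairing theorem of Lipshitz--Ozsv\'ath--Thurston yields
$$\HF(Y(K_1, K_2)) \cong H_*\!\left(\CFA(X_{K_1}) \boxtimes \CFDD(\phi) \boxtimes \CFD(X_{K_2})\right),$$
where $\CFDD(\phi)$ is the type $DD$ bimodule over the torus algebra encoding $\phi$. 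I would begin by writing down $\CFDD(\phi)$ explicitly in terms of the torus algebra generators $\rho_1, \rho_2, \rho_3$ (and their compositions), so that the entire right-hand side becomes a concrete algebraic gadget.

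The next step is to express each factor in terms of knot Floer data. Since each $Y_i$ is an L-space homology sphere, the Lipshitz--Ozsv\'ath--Thurston correspondence identifies $\CFD(X_{K_i})$ (in the standard parametrization) with an explicit type $D$ module built from a reduced model for $\CFK^-(K_i)$: generators of $\HFK(K_i)$ contribute $\iota_0$ generators, while each pair related by a length-$n$ horizontal or vertical differential of $\CFK^-$ contributes $n$ additional $\iota_1$ generators joined by a prescribed chain of $\rho$-labeled arrows; a dual description holds for $\CFA(X_{K_i})$. Substituting these into the box product reduces the theorem to a combinatorial statement about a finitely generated chain complex over $\F_2$ associated to the pair $(\CFK^-(K_1), \CFK^-(K_2))$ and $\phi$.

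The hypothesis that $K_1$ and $K_2$ are nontrivial then enters through Ozsv\'ath--Szab\'o's theorem that knot Floer homology detects the Seifert genus: each $K_i$ has $g(K_i) \geq 1$, so $\HFK(K_i)$ is nonzero in Alexander grading $g(K_i)$, which forces at least one nontrivial horizontal arrow and at least one nontrivial vertical arrow in any reduced model of $\CFK^-(K_i)$. My strategy is to exhibit two linearly independent homology classes in the box complex: the first is the ``canonical'' class pairing the top generators of $\HFK(K_1)$ and $\HFK(K_2)$, which must survive since $\dim \HF(Y(K_1,K_2)) \geq |H_1| = 1$; the second is built from generators sitting on the above horizontal and vertical arrows, linked to one another across $\CFDD(\phi)$ via a specific $\rho$-chain determined by the way $\phi$ swaps the two boundary arcs. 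The main obstacle, and the technical heart of the argument, is verifying that this second class is not a boundary. I expect this to come down to ruling out a short list of potential differentials by a combination of Alexander/Maslov grading considerations and direct inspection of the torus algebra relations, with the worst case being knots of genus one with small $\HFK$ (for instance, knots with $\Delta(K_i) = 1$), where the bordered modules are sparsest and there is the least room to absorb the candidate cycle.
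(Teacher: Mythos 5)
Your high-level frame is the same as the paper's (pairing theorem, the Lipshitz--Ozsv\'ath--Thurston formula for $\CFD(X_K)$ in terms of $\CFKm$, and genus detection to get extremal Alexander-grading classes), but as written the argument has two genuine gaps. First, the logic producing two independent classes does not close. The bound $\dim\HF \ge |H_1| = 1$ tells you \emph{some} class survives; it does not let you designate your chosen ``top $\otimes$ top'' class as surviving, and even if you verify one explicit cycle is not a boundary, that alone only re-proves rank $\ge 1$ --- you would still need either a second explicit surviving class or an Euler-characteristic/$\Z/2$-grading argument to force rank $\ge 2$. The paper instead exhibits two explicit subspaces that inject into homology, both at the \emph{bottom} of the Alexander grading: $B_{K_1}\otimes B_{K_2}$, where $B_K \cong \HFK(Y,K,-g(K))$ sits inside $\iota_0\CFD(X_K)$, and $V_{K_1}\otimes V_{K_2}$, spanned by the adjacent $\iota_1$ generators of the vertical/unstable chains; it then shows no differentials enter or leave either one. (Also, your hoped-for ``dual description'' of $\CFA(X_K)$ is not available off the shelf: the paper has to prove an algorithm, its Theorem \ref{thm:identity}, converting $\CFD$ data into $\AA_\infty$ operations.)

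Second, the step you defer --- ``ruling out a short list of potential differentials by Alexander/Maslov gradings and direct inspection'' --- is where essentially all of the content lies, and gradings plus torus-algebra relations are not enough. The paper's exclusion of differentials into and out of $B_K$ and $V_K$ (Propositions \ref{prop:BK-D}--\ref{prop:VK-A}) rests on: simultaneously compatible vertically and horizontally simplified bases for $\CFKm$ with a filtration-homogeneous change of basis, obtained via Hom's $\epsilon$ invariant (Proposition \ref{prop:bases}); structural lemmas such as Lemma \ref{lemma:length1} (length-one horizontal arrows out of grading $-g$ land on vertical cycles) and Lemma \ref{lemma:g1tau0} (no knot in an L-space homology sphere has $g=1$, $\tau=0$, $\epsilon\ne 0$); and relative boundedness (Proposition \ref{prop:relbounded}) just to work with $\boxtimes$. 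That the L-space hypothesis must enter structurally, not merely through gradings, is exactly the point of the paper's closing example: for the core of $+1$-surgery on the left-handed trefoil (a knot in $-\Sigma(2,3,7)$), the extremal generator acquires precisely the kind of outgoing compositions ($D_2\circ D_{123}$ and $D_{12}\circ D_2\circ D_3$ nonzero) that your inspection would need to exclude, and the analogous candidate class can die. So your plan needs these missing inputs before the ``short list'' can actually be ruled out; your instinct that small genus-one knots are delicate is right, but the delicate case is governed by $\tau$ and $\epsilon$ rather than by $\Delta(K)=1$.
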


\noindent Removing the hypothesis that $Y_1$ and $Y_2$ are themselves L-spaces would complete the proof of Conjecture \ref{conj:torus}.  Of course we have the immediate corollary:

\begin{cor} \label{cor:S3}
Splicing the complements of nontrivial knots in the $3$-sphere never produces an $L$-space.
\end{cor}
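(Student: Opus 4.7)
Corollary \ref{cor:S3} follows immediately from Theorem \ref{thm:main}: the 3-sphere is an L-space homology sphere (since $\HF(S^3) \cong \F$), so applying the theorem with $Y_1 = Y_2 = S^3$ gives $\dim \HF(Y(K_1, K_2)) > 1$; as $Y(K_1, K_2)$ is a homology sphere by Mayer--Vietoris, it cannot be an L-space. The substance lies in Theorem \ref{thm:main}, whose plan I sketch below.

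The approach is to invoke the pairing theorem of Lipshitz--Ozsv\'ath--Thurston's bordered Heegaard Floer homology, expressing $\HF(Y(K_1, K_2))$ as the homology of the box tensor product $\CFA(X_{K_1}) \boxtimes \CFD(X_{K_2})$ relative to suitable parametrizations of the boundary tori. First I would fix parametrizations encoding the splicing identifications $\lambda_{K_1} \leftrightarrow \mu_{K_2}$ and $\lambda_{K_2} \leftrightarrow \mu_{K_1}$, so that the gluing becomes a standard one in the torus algebra. Each bordered invariant would then be analyzed via the explicit formula describing $\CFD$ of a $0$-framed knot exterior in terms of the knot Floer complex $\CFKi(Y_i, K_i)$. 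The L-space hypothesis forces $\HF(Y_i) \cong \F$, which significantly constrains $\CFKi(Y_i, K_i)$; crucially, nontriviality of $K_i$ forces $\HFK(Y_i, K_i)$ to have generators in Alexander grading $\pm g(K_i) > 0$ by the Ozsv\'ath--Szab\'o genus detection theorem, producing specific chains in the type D structure that cannot be eliminated by a change of basis.

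The core of the proof is to leverage this structural input to exhibit at least two linearly independent classes in the homology of $\CFA(X_{K_1}) \boxtimes \CFD(X_{K_2})$: a \emph{baseline} class always present in such a pairing, and an \emph{extra} class produced by the top-Alexander-grading generators forced by nontriviality. The principal obstacle will be controlling the differentials in the box tensor product well enough to certify that the extra class is not cancelled. I expect this to require isolating a distinguished sub-structure of each bordered invariant corresponding to the top-Alexander-grading generators, coupled with a Maslov grading argument ruling out a differential between the baseline and extra classes. Because the $K_i$ are not assumed to be L-space knots, $\CFKi(Y_i, K_i)$ need not be a staircase, so the chief technical difficulty will be carrying out this analysis uniformly over all nontrivial knots in L-space homology spheres rather than only over the simpler L-space knot case.
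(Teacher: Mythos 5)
Your deduction of the corollary is correct and is exactly the paper's: Corollary \ref{cor:S3} is presented there as an immediate consequence of Theorem \ref{thm:main}, obtained by taking $Y_1 = Y_2 = S^3$ (an L-space homology sphere) and noting that $Y(K_1,K_2)$ is a homology sphere with $\dim \HF(Y(K_1,K_2)) > 1$. Your appended outline of Theorem \ref{thm:main} is only a sketch and deviates in its details from the paper's actual argument (which produces both surviving classes from the bottom Alexander-grading subspaces $B_{K_i}$ and $V_{K_i}$ by explicit analysis of coefficient maps and $\AA_\infty$ multiplications, with no Maslov-grading step), but since the theorem is quoted as given, this does not affect the corollary.
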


Our strategy for studying $\HF(Y(K_1, K_2))$ is to relate it to the knot Floer homology of $K_1$ and $K_2$. For a knot $K \subset Y$ in an integral homology sphere, $\HFK(Y,K)$ is a bigraded vector space over $\F=\Z/2\Z$,
\[
\HFK(Y,K) = \bigoplus_{a,m \in\Z} \HFK_m(Y,K,a),
\]
whose graded Euler characteristic is the Alexander polynomial of $K$ \cite{OSzKnot, RasmussenThesis}. These groups detect the Seifert genus of $K$ \cite[Theorem 1.2]{OSzGenus}, in the sense that
\[
g(K) = \max \{a \mid \HFK_*(Y,K,a) \ne 0\} = -\min \{a \mid \HFK_*(Y,K,a) \ne 0\}.
\]
If $K_1$ and $K_2$ are nontrivial knots, we show that $\HF(Y(K_1,K_2))$ contains a subspace of dimension
\[
2 \cdot \dim \HFK_*(Y_1, K_1, -g(K_1)) \cdot \dim \HFK_*(Y_2, K_2, -g(K_2)) \ge 2,
\]
which implies Theorem \ref{thm:main}. Indeed, since $\dim \HFK_*(Y,K,-g(K))=1$ if and only if $K$ is a fibered knot \cite{GhigginiFibred, NiFibered}, we obtain a stronger lower bound on $\dim \HF(Y(K_1,K_2))$ if either $K_1$ or $K_2$ is non-fibered.

Our basic tool for proving Theorem \ref{thm:main} is \emph{bordered Floer homology} \cite{LOTBordered}, which can be used to compute the Heegaard Floer homology of a closed $3$-manifold obtained by gluing two pieces along a common boundary as the homology of the derived tensor product of algebraic invariants associated to the pieces. We review some of the basics of this theory in Section \ref{sec:bordered}. In the present setting, we have
\[
\HF(Y(K_1, K_2)) \cong H_*(\CFA(X_{K_1}) \boxtimes \CFD(X_{K_2})),
\]
where $\CFA(X_{K_1})$ and $\CFD(X_{K_2})$ are the bordered invariants of $X_{K_1}$ and $X_{K_2}$ with suitable boundary parameterizations. Lipshitz, Ozsv\'ath, and Thurston give a formula describing $\CFD$ of the complement of a knot in an L-space homology sphere in terms of the knot Floer complex of the knot \cite{LOTBordered}, and a simple algorithm (given below as Theorem \ref{thm:identity}) yields a similar description of $\CFA$. Using an Alexander grading on the bordered invariants, we can identify subspaces of $\CFA(X_{K_1})$ and $\CFD(X_{K_2})$ that are isomorphic to the corresponding knot Floer homology groups in extremal Alexander grading and whose algebraic structure can be understood quite explicitly. These subspaces combine in the tensor product to produce the subgroup of $\HF(Y(K_1, K_2))$ described above.

In a sequence of preprints in 2008, Eaman Eftekhary announced a proof of Conjecture \ref{conj:torus}, but several delicate technical issues were overlooked in his original treatment. In \cite{EftekharySplicing}, Eftekhary provides a chain complex that ostensibly computes $\HF(Y(K_1,K_2))$ in terms of data associated to $K_1$ and $K_2$, essentially by using a precursor to bordered Floer homology. The original version of this complex yielded incorrect results; for instance, its homology has rank $13$ in the case where $K_1$ and $K_2$ are both the right-handed trefoil in $S^3$, whereas a computation using bordered Floer homology, given below in Section \ref{sec:examples}, shows that the correct rank is only $7$.\footnote{In this case, $Y(K_1,K_2)$ can also be obtained as $+1$ surgery on the positive, untwisted Whitehead double of the right-handed trefoil, whose knot Floer complex is known via \cite{HeddenWhitehead, HeddenLivingstonRuberman}. The surgery formula from \cite{OSzKnot} confirms that $\HF(Y(K_1,K_2))$ has rank $7$.} Subsequent to the submission of the present article, Eftekhary released a revision of \cite{EftekharySplicing} that provides a corrected version of this chain complex  and apparently yields an alternate proof of Theorem \ref{thm:main}. (However, Eftekhary's original proof of Conjecture \ref{conj:torus} relies on work that has been retracted.)

\subsection*{Acknowledgments}
The authors are grateful to Eaman Eftekhary, Jonathan Hanselman, Jen Hom, Robert Lipshitz, Peter Ozsv\'ath, and Dylan Thurston for many enlightening conversations, and to the referees for helpful suggestions.

\section{Bordered Heegaard Floer homology} \label{sec:bordered}

We begin by reviewing a few basic definitions and facts regarding bordered Heegaard Floer homology \cite{LOTBordered}, focusing on the case of manifolds with torus boundary. Some of this material is adapted from the second author's exposition in \cite[Section 2]{LevineDoublingOperators}.

\subsection{Algebraic preliminaries}
In this subsection, we recall the key algebraic structures that occur in bordered Floer homology, known as {\em $\AA_\infty$--modules} and {\em type D structures}. While these objects can be defined in general over an underlying $\AA_\infty$--algebra $\AA$, the relevant algebra for our purposes is merely differential graded, so it will be convenient to give the definitions in this simplified setting.

Let $(\AA, d)$ be a unital differential algebra over $\F = \Z/2\Z$, and assume that the set  $\II$ of idempotents in $\AA$ is a commutative subring of $\AA$ and possesses a basis $\{\iota_i\}$ over $\F$ such that $\iota_i \iota_j = \delta_{ij} \iota_i$ and $\sum_i \iota_i = \one$, the identity element of $\AA$. A \emph{(right) $\AA_\infty$--module} or \emph{(right) type $A$ module} over $\AA$ is a vector space $M$ equipped with a right action of $\II$ such that
\[
M = M \iota_1 \oplus...\oplus M \iota_n
\]
as a vector space, and multiplication maps
\[
m_{k+1}\co M \otimes_\II \underbrace{\AA \otimes_\II \dots \otimes_\II \AA}_{k
\text{ times}} \to M
\]
satisfying the $\AA_\infty$ relations: for any $x \in M$ and $a_1, \dots, a_n \in \AA$,
\begin{equation} \label{eq:a-relation}
\begin{split}
0 & = \sum_{i=0}^n m_{n-i+1}(m_{i+1}(x \otimes a_1 \otimes \dots \otimes a_i) \otimes a_{i+1} \otimes \dots \otimes a_n) \\
& + \sum_{i=1}^{n} m_{n+1}(x \otimes a_1 \otimes \dots \otimes a_{i-1} \otimes d(a_i) \otimes a_{i+1} \otimes \dots \otimes a_n) \\
& + \sum_{i=1}^{n-1} m_n(x \otimes a_1 \otimes \dots \otimes a_{i-1} \otimes a_i a_{i+1} \otimes a_{i+2} \otimes \dots \otimes a_n).
\end{split}
\end{equation}
We also require that $m_2(x \otimes \one)=x$ and $m_k(x \otimes \dots \otimes \one \otimes \cdots)=0$ for $k>2$. We say that $M$ is \emph{bounded} if $m_k=0$ for all $k$ sufficiently large.

A \emph{(left) type $D$ structure} over $\AA$ is an $\F$-vector space $N$, equipped with a left action of $\II$ such that
\[
N = \iota_1 N \oplus \cdots \oplus \iota_n N,
\]
and a map
\[
\delta_1\co N \to \AA \otimes_\II N
\]
satisfying the relation
\begin{equation} \label{eq:d-relation}
(\mu \otimes \id_N) \circ (\id_\AA \otimes \delta_1) \circ \delta_1 + (d \otimes \id_N) \circ \delta_1 = 0,
\end{equation}
where $\mu\co \AA \otimes \AA \to \AA$ denotes the multiplication on $\AA$. If $N$ is a type $D$ module, the tensor product $\AA \otimes_\II N$ is naturally a left differential module over $\AA$, with module structure given by $a \cdot (b \otimes x) = ab \otimes x$, and differential $\partial(a \otimes x) = a \cdot \delta_1(x)+d(a)\otimes x$. Condition \eqref{eq:d-relation} translates to $\partial^2=0$. We inductively define maps
\[
\delta_k\co N \to \underbrace{\AA \otimes_\II \dots \otimes_\II \AA}_{k \text{times}} \otimes_\II N
\]
by $\delta_0 = \id_N$ and $\delta_k = (\id_{\AA^{\otimes k-1}} \otimes \delta_1) \circ \delta_{k-1}$. We say $N$ is \emph{bounded} if $\delta_k=0$ for all $k$ sufficiently large.

If $M$ is a type $A$ module and $N$ is a type $D$ module, the $\AA_\infty$--tensor product $M \mathbin{\tilde\otimes} N$ \cite[Definition 2.12]{LOTBordered} is a chain complex whose chain homotopy type depends only on the chain homotopy types of $M$ and $N$ (using suitable notions of chain homotopy equivalence for type $A$ and $D$ modules).



If either $M$ or $N$ is bounded, the \emph{box tensor product} $M \boxtimes N$ is the vector space $M \otimes_\II N$, equipped with the differential
\[
\partial^\boxtimes(x \otimes y) = \sum_{k=0}^\infty (m_{k+1} \otimes \id_N)(x \otimes \delta_k(y)).
\]
This is a finite sum, and \eqref{eq:a-relation} and \eqref{eq:d-relation} imply that $\partial^\boxtimes \circ \partial^\boxtimes = 0$. Lipshitz, Oszv\'ath, and Thurston \cite[Proposition 2.34]{LOTBordered} show that when $M$ or $N$ is bounded, $M \boxtimes N$ is chain homotopy equivalent to $M \mathbin{\tilde \otimes} N$.

In addition to type $A$ modules and $D$ structures over $\AA$, we can also talk about bimodules (or trimodules, et cetera). These come in several flavors, known as type $AA$, $AD$, $DA$, $DD$. For instance, for differential graded algebras $\AA$ and $\BB$ a left-left type $DD$ bimodule over $(\AA, \BB)$ is simply a left type $D$ module over $\AA \otimes \BB$; the other types are slightly more complicated. The $\AA_\infty$ tensor products of bimodules behave as expected: for instance, given a right type $A$ module $M$ over $\AA$ and a type $DD$ bimodule $N$ over $(\AA, \BB)$, $M \mathbin{\tilde\otimes_\AA} N$ is a type $D$ module over $\BB$. The box tensor product $\boxtimes$ may be used in place of $\tilde\otimes$ under suitable conditions. See \cite[Section 2]{LOTBimodules} for the complete definitions.

\subsection{Invariants of bordered manifolds}

We will focus solely on the case of torus boundary. We consider $T^2 = S^1 \times S^1$, oriented by choosing the same orientation on both $S^1$ factors and taking the product orientation.

The \emph{torus algebra} $\AA = \AA(T^2)$ is freely generated as a vector space over $\F$ by mutually orthogonal idempotents $\iota_0$ and $\iota_1$ and additional elements $\rho_1$, $\rho_2$, $\rho_3$, $\rho_{12}$, $\rho_{23}$, and $\rho_{123}$, with the following nonzero multiplications:
\[
\begin{aligned}
\iota_0 \rho_1 = \rho_1 \iota_1 &= \rho_1 & \iota_1 \rho_2 = \rho_2 \iota_0 &= \rho_2 & \iota_0 \rho_3 = \rho_3 \iota_1 &= \rho_3 \\
\iota_0 \rho_{12} = \rho_{12} \iota_0 &= \rho_{12} & \iota_1 \rho_{23} = \rho_{23} \iota_1 &= \rho_{23} & \iota_0 \rho_{123} = \rho_{123} \iota_1 &= \rho_{123} \\
\rho_1 \rho_2 &= \rho_{12} & \rho_2 \rho_3 &= \rho_{23} & \rho_{12} \rho_3 = \rho_1 \rho_{23} &= \rho_{123}
\end{aligned}
\]
(All other multiplications among the generators zero.) The multiplicative identity in $\AA$ is $\one = \iota_0 + \iota_1$. The differential on $\AA$ is defined to be zero; note that this eliminates the second sum in \eqref{eq:a-relation} and the third term in \eqref{eq:d-relation}.

A \emph{bordered manifold (with torus boundary)} is an oriented $3$-manifold $Y$ along with a diffeomorphism $\phi\co T^2 \to \partial Y$, which we consider up to isotopy fixing a neighborhood of a point. We call $(Y,\phi)$ \emph{type $A$} if $\phi$ is orientation-preserving and \emph{type $D$} if $\phi$ is orientation-reversing. Lipshitz, Ozsv\'ath, and Thurston associate to a type $A$ bordered manifold $(Y_1, \phi_1)$ a type $A$ module $\CFA(Y_1, \phi_1)$ over $\AA$, and to a type $D$ bordered manifold $(Y_2, \phi_2)$ a type $D$ module $\CFD(Y_2, \phi_2)$ over $\AA$. (The maps $\phi_1$ and $\phi_2$ are often suppressed from the notation if they are understood from the context.) Up to the appropriate notion of chain homotopy equivalence, each of these modules is a diffeomorphism invariant of the manifold with parametrized boundary. These invariants are defined in terms of counts of pseudo-holomorphic curves in $\Sigma \times [0,1] \times \R$, where $\Sigma$ is a bordered Heegaard diagram; we shall say nothing more about the definition. The \emph{pairing theorem} states that the Heegaard Floer homology of the closed, oriented $3$-manifold gotten by gluing $Y_1$ to $Y_2$ along their boundaries via the diffeomorphism $\phi_2 \circ \phi_1^{-1}$ is determined by the bordered invariants of $Y_1$ and $Y_2$:
\[
\HF(Y_1 \cup_{\phi_2 \circ \phi_1^{-1}} Y_2) \cong H_*\left( \CFA(Y_1, \phi_1) \mathbin{\tilde\otimes} \CFD(Y_2, \phi_2) \right).
\]
There are also various bimodules associated to manifolds with two boundary components, denoted $\CFAA$, $\CFAD$, $\CFDA$, and $\CFDD$ according to whether the parameterizations of the boundary components are orientation-preserving or orientation-reversing, and similar gluing theorems apply. See \cite{LOTBordered, LOTBimodules} for further details.

Lipshitz, Ozsv\'ath, and Thurston provide a convenient notation for type $D$ modules over the torus algebra $\AA(T^2)$ \cite[Section 11.1]{LOTBordered}. If $I_1, \dots, I_k$ are finite sequences of integers, let $I_1 \cdots I_k$ denote their concatenation. Let $\RR$ denote the set of nonempty, strictly increasing sequences of consecutive integers in $\{1,2,3\}$, and let $\RR' = \RR \cup \{\emptyset\}$. Thus, the non-idempotent generators of $\AA(T^2)$ correspond to elements of $\RR$; for convenience, we define $\rho_\emptyset = \one$.

Let $V = V^0 \oplus V^1$ be a $\Z/2$--graded vector space over $\F=\Z/2\Z$. A \emph{collection of coefficient maps} consists of a linear map $D = D_\emptyset \co V \to V$ taking $V^0$ to $V^0$ and $V^1$ to $V^1$, and, for each $I = (i_1, \dots, i_n) \in \RR$, a linear map $D_I \co V^{[i_1-1]} \to V^{[i_n]}$ (where for $i \in \Z$, $[i] \in \{0,1\}$ denotes the mod-2 reduction of $i$), satisfying the condition that for each $I \in \RR'$,
\begin{equation} \label{eq:coeffmaps}
\sum_{J,K \in \RR' \mid JK = I} D_K \circ D_J = 0,
\end{equation}
where the sum is taken over all pairs of elements in $\RR'$ whose concatenation is $I$. In other words, $D_\emptyset$ is a differential; $D_1$, $D_2$, and $D_3$ are chain maps; $D_{12}$ and $D_{23}$ are nullhomotopies of $D_2 \circ D_1$ and $D_3 \circ D_2$, respectively; and $D_{123}$ is a homotopy between $D_{23} \circ D_1$ and $D_3 \circ D_{12}$. For convenience, we may trivially extend each $D_I$ over all of $V^0 \oplus V^1$. A collection of coefficient maps determines a type $D$ structure on $V$: define multiplication by $\iota_0$ and $\iota_1$ by projection onto $V^0$ and $V^1$ respectively, and for each $v \in V$, define
\[
\delta_1(v) = \sum_{I \in \RR'} \rho_I \otimes D_I(v).
\]
The higher maps $\delta_k$ are then given by compositions of the maps $D_I$:
\[
\delta_k(v) = \sum_{I_1, \dots, I_k \in \RR'} \rho_{I_1} \otimes \dots \otimes \rho_{I_k} \otimes (D_{I_k} \circ \dots \circ D_{I_1})(v).
\]
Furthermore, any type $D$ structure over $\AA$ can be obtained in this manner \cite[Lemma 11.5]{LOTBordered}.

We say that $(V, \{D_I\})$ is \emph{reduced} if $D_{\emptyset}=0$, in which case the relations in \eqref{eq:coeffmaps} simplify to:
\begin{equation} \label{eq:reducedcoeffmaps}
D_2 \circ D_1 = 0 \qquad D_3 \circ D_2 = 0 \qquad D_3 \circ D_{12} = D_{23} \circ D_1.
\end{equation}
It is not hard to see that any type $D$ structure is homotopy equivalent to a reduced one.  See \cite[Section 2.6]{LevineDoublingOperators} for more details.

Finally, if $M$ is a type $A$ module and either $M$ or $V$ is bounded, then the differential on the box tensor product $M \boxtimes V$ is given explicitly by
\[
\partial^\boxtimes(x \otimes y) = \sum_{I_1, \dots, I_r \in \RR} m_{r+1}(x \otimes \rho_{I_1} \otimes \dots \otimes \rho_{I_r}) \otimes (D_{I_r} \circ \dots \circ D_{I_1})(y).
\]
for each $x \in M$ and $y \in V$. (The sum includes an $r=0$ term, where the composition of zero coefficient maps is the identity on $V$.)

\subsection{Computing \texorpdfstring{$\CFA$}{CFA} from \texorpdfstring{$\CFD$}{CFD}}

Let $r \co T^2 \to T^2$ be the orientation-reversing involution that interchanges the two coordinates of $S^1 \times S^1$. This involution gives a one-to-one correspondence between type $A$ and type $D$ bordered manifolds, given by $(Y,\phi) \mapsto (Y, \phi \circ r)$. The bordered invariants of $(Y, \phi)$ and $(Y, \phi \circ r)$ are related by taking tensor products with the appropriate \emph{identity bimodules}, $\CFAA(\I)$ and $\CFDD(\I)$. Here $\I$ denotes the mapping cylinder of the identity map of $T^2$. If $(Y, \phi)$ is a type $A$ bordered $3$-manifold, \cite[Corollary 1.1]{LOTBimodules} says that
\begin{equation} \label{eq:identitybimodule}
\CFD(Y, \phi \circ r) \simeq \CFA(Y,\phi) \mathbin{\tilde\otimes} \CFDD(\I) \quad \text{and} \quad \CFA(Y, \phi) \simeq \CFAA(\I) \mathbin{\tilde\otimes} \CFD(Y, \phi \circ r).
\end{equation}
Here, we view $\CFAA(\I)$ as a right-right $AA$ bimodule and $\CFDD(\I)$ as a left-left $DD$ bimodule, each over $(\AA, \AA)$.\footnote{Our perspective here is slightly different from that of Lipshitz, Ozsv\'ath, and Thurston, who use two distinct algebras associated to $T^2$ and $-T^2$, denoted $\AA(T^2)$ and $\AA(-T^2)$, where $\AA(-T^2) = \AA(T^2)^{\operatorname{op}}$. If $(Y,\phi)$ is a type $A$ bordered manifold (in the sense used above), then one can define $\CFD(Y,\phi)$ as a type $D$ structure over $\AA(-T^2)$, and one views $\CFAA(\I)$ and $\CFDD(\I)$ as $(\AA(T^2), \AA(-T^2))$-bimodules. To see how \eqref{eq:identitybimodule} follows from \cite[Corollary 1.1]{LOTBimodules}, note that the map $r$ (which can be realized as the symmetry of the pointed matched circle associated to the torus) induces an isomorphism between $\AA(T^2)$ and $\AA(-T^2)$, which gives the identification between $\CFD(Y,\phi)$ (as a type $D$ module over $\AA(-T^2)$) and $\CFD(Y, \phi \circ r)$ (as a type $D$ module over $\AA(T^2)$). We find it conceptually simpler to work with a single algebra, at the cost of being more explicit about the role of $r$.} Thus, if a parametrization $\phi$ (either orientation-preserving or orientation-reversing) is understood from context, we will simply speak of $\CFA(Y)$ and $\CFD(Y)$. Note that the two halves of \eqref{eq:identitybimodule} are equivalent statements, since by \cite[Theorem 2.3]{LOTMorphism} the operations of tensoring with $\CFAA(\I)$ and $\CFDD(\I)$ are inverses up to homotopy equivalence. That is, if $M$ is a type $A$ module and $N$ is a type $D$ module, then $M \simeq \CFAA(I) \mathbin{\tilde\otimes} N$ if and only if $M \mathbin{\tilde\otimes} \CFDD(\I) \simeq N$.

We now describe an algorithm for computing $\CFA(Y)$ from $\CFD(Y)$, based on an idea described to us by Peter Ozsv\'ath. The basic idea is as follows. We begin by taking a basis for $\CFD(Y)$ as a basis for $\CFA(Y)$.  The nonzero multiplications on $\CFA(Y)$ are then in bijection with nonzero compositions of the coefficient maps for $\CFD(Y)$. Specifically, if
\[
D_{J_r}\circ D_{J_{r-1}}\circ \cdots \circ D_{J_1}(v) = w,
\]
then
\[
m_{k+1}(v\otimes \rho_{I_1}\otimes \cdots \otimes \rho_{I_k})=w,
\]
where the relationship between $(J_1,\dots,J_r)$ and $(I_1,\dots,I_k)$ is determined by the procedure:
\begin{enumerate}
\item Replace all occurrences of $1$ in the string $J_1\cdots J_r$ with $3$, and vice-versa.
\item Write the resulting string $I$ (uniquely) as a concatenation $I=I_1\cdots I_k$ of increasing sequences $I_i$ satisfying $\last(I_i)>\first(I_{i+1})$ for all $i=1,\dots ,k-1$.
\end{enumerate}
(Here, $\first(I)$ and $\last(I)$ denote the first and last entries of $I$, respectively.) For example, suppose in $\CFD(Y)$ we have $D_{23}\circ D_{23}\circ D_{123}(v)=w$.  We first take the string $1232323$ and replace it with $3212121$, which we then parse as $3,2,12,12,1$.  This tells us that in $\CFA(Y)$ we have a multiplication $m_6(v\otimes \rho_{3}\otimes \rho_{2} \otimes \rho_{12} \otimes \rho_{12} \otimes \rho_{1})=w$. (See Section \ref{sec:examples} for an example of this procedure applied to $\CFD$ of the trefoil complement.)

To be more precise, let $\SS$ denote the set of strictly decreasing, nonempty sequences of consecutive elements of $\{1,2,3\}$,\footnote{We shall write elements of $\RR$ and $\SS$ as strings of digits, without parentheses or commas.} and let $\phi\co \SS \to \RR$ denote the bijection defined by interchanging the roles of $1$ and $3$:
\[
\begin{aligned}
\phi(1) &= 3  & \phi(2) &= 2 & \phi(3) &= 1 \\
\phi(21) &= 23  & \phi(32) &= 12 & \phi(321) &= 123. \\
\end{aligned}
\]
A finite sequence of integers $I$ is called \emph{alternating} if its entries alternate in parity. Strong induction on length shows that for any alternating sequence $I$ of elements of $\{1,2,3\}$, there is a unique decomposition $I = J_1 \cdots J_j$ such that $J_1, \dots, J_j \in \SS$ and for each $i = 1, \dots, j-1$, $\last(J_i) < \first(J_{i+1})$. In this case, we define $\Psi(I) = (J_1, \dots, J_j)$. The following lemma is an immediate consequence of the definition of $\Psi$:

\begin{lemma} \label{lemma:concatenate}
Let $I$ and $I'$ be alternating sequences whose concatenation $II'$ is alternating. Suppose that $\Psi(I) = (J_1, \dots, J_j)$ and $\Psi(I') = (K_1, \dots, K_k)$. Then
\[
\Psi(II') = \begin{cases}
 (J_1, \dots, J_j, K_1, \dots, K_k) & if\ \last(I) < \first(I') \\
 (J_1, \dots, J_{j-1}, J_j K_1, K_2, \dots, K_k) & if\ \last(I) > \first(I').
 \end{cases}
\]
\end{lemma}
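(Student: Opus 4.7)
The plan is to invoke the uniqueness of the decomposition $\Psi$ (already established in the preceding paragraph) to avoid constructing $\Psi(II')$ from scratch: in each of the two cases, I would exhibit a candidate tuple that satisfies the defining properties of $\Psi(II')$, and then uniqueness does the rest.

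First I would observe that the hypothesis that $II'$ is alternating rules out $\last(I) = \first(I')$, since equal values have the same parity, so the two stated cases genuinely exhaust all possibilities. Next, write $\Psi(I) = (J_1, \dots, J_j)$ and $\Psi(I') = (K_1, \dots, K_k)$ once and for all, and note $\last(I) = \last(J_j)$ and $\first(I') = \first(K_1)$.

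In the case $\last(I) < \first(I')$, the candidate tuple $(J_1, \dots, J_j, K_1, \dots, K_k)$ is automatically a list of elements of $\SS$, and every inequality of the form $\last(\cdot) < \first(\cdot)$ between successive entries is inherited from $\Psi(I)$ or $\Psi(I')$, except at the new seam, where the required inequality $\last(J_j) < \first(K_1)$ is precisely the hypothesis. So this tuple satisfies the defining properties of $\Psi(II')$, and uniqueness identifies them.

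In the case $\last(I) > \first(I')$, the only non-bookkeeping point — and the main (and essentially only) obstacle — is verifying that the merged block $J_j K_1$ really lies in $\SS$, i.e., is a strictly decreasing run of consecutive integers in $\{1,2,3\}$. Here the alternating hypothesis is essential: setting $v_1 = \last(J_j)$ and $v_2 = \first(K_1)$, alternation forces $v_1$ and $v_2$ to have opposite parities, and since $v_1 > v_2$ with $v_1, v_2 \in \{1,2,3\}$, this forces $v_2 = v_1 - 1$. Thus $J_j$ (strictly decreasing consecutive, ending at $v_1$) glues onto $K_1$ (strictly decreasing consecutive, starting at $v_1 - 1$) to produce an element of $\SS$. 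The boundary inequalities $\last(J_{j-1}) < \first(J_j K_1) = \first(J_j)$ and $\last(J_j K_1) = \last(K_1) < \first(K_2)$ are inherited from $\Psi(I)$ and $\Psi(I')$, so $(J_1, \dots, J_{j-1}, J_j K_1, K_2, \dots, K_k)$ satisfies the defining properties of $\Psi(II')$, and again uniqueness concludes.
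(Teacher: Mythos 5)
Your proof is correct and follows the same route the paper has in mind: the paper simply declares the lemma ``an immediate consequence of the definition of $\Psi$,'' and your argument spells out exactly that — exhibit the candidate decomposition in each case, check the one nontrivial point (that alternation forces $\first(K_1)=\last(J_j)-1$, so $J_jK_1\in\SS$), and conclude by the uniqueness of the decomposition defining $\Psi$.
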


The algorithm is given by the following theorem:

\begin{theorem} \label{thm:identity}
Let $(V, \delta_1)$ be a reduced type-$D$ module over $\AA$, seen as a finite-dimensional vector space $V = V^0 \oplus V^1$, with coefficient maps $D_1, D_2, D_3, D_{12}, D_{23}, D_{123}$ satisfying \eqref{eq:reducedcoeffmaps}. For $k \ge 0$, define maps
\[
m_{k+1} \co V \otimes \AA^{\otimes k} \to V
\]
as follows. Set $m_1 = 0$. For $k >1$ and any $I_1, \dots, I_k \in \RR$ whose concatenation $I_1 \cdots I_k$ is alternating and for which $\last(I_i) > \first(I_{i+1})$ for all $i=1, \dots, k-1$, write
\[
\Psi(I_1 \cdots I_k) = (J_1, \dots, J_j)
\]
and define
\[
m_{k+1}(v \otimes \rho_{I_1} \otimes \cdots \otimes \rho_{I_k}) = (D_{\phi(J_j)} \circ \cdots \circ D_{\phi(J_1)})(v)
\]
for all $v \in V$. For any other $I_1, \dots, I_k$, define
\[
m_{k+1}(v \otimes \rho_{I_1} \otimes \cdots \otimes \rho_{I_k})=0.
\]
Then the maps $m_k$ satisfy the $\AA_\infty$ relations. Furthermore, we have
\[
(V, \delta_1) \simeq (V, \{m_k\}) \boxtimes \CFDD(\I) \quad \text{and} \quad (V, \{m_k\}) \simeq \CFAA(\I) \boxtimes (V, \delta_1).
\]
\end{theorem}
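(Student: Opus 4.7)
The plan is to prove both equivalences at once; since $\CFAA(\I)$ and $\CFDD(\I)$ are mutually inverse up to homotopy equivalence \cite[Theorem 2.3]{LOTMorphism}, the second statement $(V, \{m_k\}) \simeq \CFAA(\I) \boxtimes (V, \delta_1)$ implies the first by box-tensoring both sides on the right with $\CFDD(\I)$. The cleanest strategy, which simultaneously supplies the $\AA_\infty$-relations for the maps $m_k$, is reverse-engineering: set $M := \CFAA(\I) \boxtimes (V, \delta_1)$, which is automatically a well-defined type-$A$ module once the needed boundedness is in place, then compute its $\AA_\infty$-operations explicitly and check that they match the stated combinatorial formula.

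Concretely, I would work with the explicit finite model for $\CFAA(\I)$ given by Lipshitz--Ozsv\'ath--Thurston \cite[Section 10]{LOTBimodules}. Its generators span a vector space in natural bijection with $V$ (with idempotent labels swapped to reflect the involution $r$), and its higher operations are rigidly constrained. Unpacking a single box-operation
\[
m_{k+1}(v \otimes \rho_{I_1} \otimes \cdots \otimes \rho_{I_k})
\]
produces a sum over sequences $J_1, \dots, J_j \in \SS$ of compositions $D_{\phi(J_j)} \circ \cdots \circ D_{\phi(J_1)}$ applied to $v$. The shape of $\CFAA(\I)$ forces nonzero contributions to occur exactly when the concatenation $I_1 \cdots I_k$ is alternating and satisfies $\last(I_i) > \first(I_{i+1})$ for each $i$; in that case the decomposition into decreasing runs is precisely $\Psi(I_1 \cdots I_k)$, with $\phi$ encoding the 1/3 swap that transports algebra elements across $r$. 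This both produces the formula in the theorem and, since $M$ is a genuine type-$A$ module by construction, certifies the $\AA_\infty$-relations.

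The main obstacle lies in the combinatorial bookkeeping: although $\CFAA(\I)$ is a concrete finite bimodule, computing its box tensor product with an arbitrary type-$D$ module requires summing over all sequences of elementary coefficient maps $D_{J_1}, \dots, D_{J_r}$ and matching them against the multilinear operations of $\CFAA(\I)$. Lemma \ref{lemma:concatenate} is the workhorse here, encoding how concatenation of algebra strings interacts with the $\Psi$-decomposition; it is used both to rule out contributions outside the alternating/descending regime and to recast the surviving contributions into canonical form. A sanity check is provided by the worked example preceding the theorem: the $D$-side string $1232323$ pulls back via $\phi$ to the $A$-side string $3212121$, which admits the unique parsing $(3, 2, 12, 12, 1)$ satisfying $\last(I_i) > \first(I_{i+1})$, thereby recovering the claimed action $m_6(v \otimes \rho_3 \otimes \rho_2 \otimes \rho_{12} \otimes \rho_{12} \otimes \rho_1) = D_{23} \circ D_{23} \circ D_{123}(v)$. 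An alternative route that avoids an explicit model of $\CFAA(\I)$ is to verify the $\AA_\infty$-relations directly from Lemma \ref{lemma:concatenate} and the reduced relations \eqref{eq:reducedcoeffmaps}, pairing the inner-composition terms $m_{n-i+1} \circ m_{i+1}$ (which split $\Psi$ at a boundary) against the multiplication terms $m_n(\cdots \otimes \rho_{I_i I_{i+1}} \otimes \cdots)$ (which fuse two adjacent runs); the bookkeeping in that approach is comparable and leads to the same cancellations.
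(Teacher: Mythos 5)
Your main route has a genuine gap: it rests on ``the explicit finite model for $\CFAA(\I)$'' from \cite[Section 10]{LOTBimodules}, but what that section actually computes is $\CFDD(\I)$ (the two-generator bimodule quoted in the paper as Proposition 10.1); $\CFAA(\I)$ is not an ordinary bimodule with finitely many operations, it carries infinitely many higher $\AA_\infty$ actions, and writing those down explicitly is essentially the same combinatorial content as Theorem \ref{thm:identity} itself. So the sentence ``the shape of $\CFAA(\I)$ forces nonzero contributions to occur exactly when the concatenation is alternating and satisfies $\last(I_i)>\first(I_{i+1})$'' is precisely the heart of the theorem, asserted rather than proved. There is also an unaddressed analytic/algebraic point: to even form $\CFAA(\I)\boxtimes(V,\delta_1)$ you need boundedness or relative boundedness, and neither factor is bounded in general (e.g.\ for the unknot complement $\delta_k(\xi)=\rho_{12}^{\otimes k}\otimes\xi$ for all $k$), so ``once the needed boundedness is in place'' cannot be waved away; one must either work with $\mathbin{\tilde\otimes}$ or supply a finiteness argument. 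Finally, your ``alternative route'' (verifying the $\AA_\infty$ relations directly from Lemma \ref{lemma:concatenate} and \eqref{eq:reducedcoeffmaps}) is exactly what the paper does for the first half of the statement, but by itself it only shows $(V,\{m_k\})$ is a type $A$ module; it does not identify it with $\CFAA(\I)\boxtimes(V,\delta_1)$, so the equivalence claim is left unproved in your write-up.

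For comparison, the paper's proof goes in the opposite direction on the pairing step, and this is what makes it short: after verifying the $\AA_\infty$ relations by hand (pairing the composition terms against the multiplication terms via Lemma \ref{lemma:concatenate}, and cancelling the descending-concatenation terms using $D_2\circ D_1=0$, $D_3\circ D_2=0$, and $D_3\circ D_{12}=D_{23}\circ D_1$), it invokes \cite[Theorem 2.3]{LOTMorphism} to reduce both equivalences to the single statement $(V,\delta_1)\simeq(V,\{m_k\})\boxtimes\CFDD(\I)$, and then checks this by a direct half-page computation with the two-generator model of $\CFDD(\I)$, where only finitely many terms of $\partial^\boxtimes$ survive because the outgoing algebra products vanish (the extra $m_4(v,\rho_3,\rho_2,\rho_1)$ term is absorbed using $D_3\circ D_2\circ D_1=0$). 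If you want to salvage your approach, the honest fix is either to carry out that $\CFDD(\I)$ computation yourself, or to prove the explicit description of $\CFAA(\I)$ first---at which point you have reproduced, not avoided, the combinatorics of the theorem.
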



\begin{proof}[Proof of Theorem \ref{thm:identity}]
To see that the maps $m_k$ satisfy the $\AA_\infty$ relations, we must show that for any $I_1, \dots, I_k \in \RR$ and any $v \in V$,
\begin{multline} \label{eq:Arelationrho}
\sum_{i=1}^{k-1} m_{k-i+1} ( m_{i+1}(v \otimes \rho_{I_1} \otimes \cdots \otimes \rho_{I_i} ) \otimes \rho_{I_{i+1}} \otimes \cdots \otimes \rho_{I_k} ) + \\
\sum_{i=1}^{k-1} m_{k-1}(v \otimes \rho_{I_1} \otimes \cdots \otimes \rho_{I_{i-1}} \otimes \rho_{I_i} \rho_{I_{i+1}} \otimes \rho_{I_{i+2}} \otimes \cdots \otimes \rho_{I_k}) = 0.
\end{multline}
We may assume that $I_1 \cdots I_k$ is alternating, since otherwise all the terms in \eqref{eq:Arelationrho} would vanish because the tensor products are taken over the ring of idempotents. Indeed, if we had a term such as $\rho_1\otimes \rho_3$ we could write it as $\rho_1\iota_1\otimes\rho_3=\rho_1\otimes \iota_1\rho_3=\rho_1\otimes 0$, with similar expressions for any other non-alternating occurrence.

If $\last(I_i) < \first(I_{i+1})$, then for any $i' \ne i$, the $i'{}\Th$ terms of both sums in \eqref{eq:Arelationrho} must both vanish by definition. Thus, we may assume that there is at most one value of $i$ for which $\last(I_i) < \first(I_{i+1})$. If such an $i$ exists, Lemma \ref{lemma:concatenate} implies that if
\[
\Psi(I_1\cdots I_i)=(L^i_1,\dots, L^i_{\ell_i}), \ \ \Psi(I_{i+1}\cdots I_k)=(M^i_1, \dots, M^i_{m_i}),\  \text{and} \ \Psi(I_1\cdots I_k)=(J_1,\dots, J_j),
\]
then  $\ell_i + m_i = j$ and
\[
(L^i_1, \dots, L^i_{\ell_i}, M^i_1, \dots, M^i_{m_i}) = (J_1, \dots, J_j).
\]
Thus the $i\Th$ term of the first sum in \eqref{eq:Arelationrho} equals
\begin{equation} \label{eq:term}
(D_{\phi(J_j)} \circ \cdots \circ D_{\phi(J_1)})(v).
\end{equation}
Since $\rho_{I_i} \rho_{I_{i+1}} = \rho_{I_i I_{i+1}}$, the $i\Th$ term of the second sum in \eqref{eq:Arelationrho} equals \eqref{eq:term} as well. Thus, the $i\Th$ terms of the two sums in \eqref{eq:Arelationrho} cancel each other, and all other terms in both sums vanish.

Therefore, we may assume that for all $i = 1, \dots, k-1$, we have $\last(I_i) > \first(I_{i+1})$. Since $\rho_{I_i} \rho_{I_{i+1}} = 0$, the entire second sum in \eqref{eq:Arelationrho} vanishes. Suppose that
\[
\Psi(I_1 \cdots I_k) = (J_1, \dots, J_j).
\]
For each $i = 1, \dots, k-1$, suppose that
\[
\Psi(I_1 \dots I_i) = (L^i_1, \dots, L^i_{\ell_i}) \quad \text{and} \quad \Psi(I_{i+1} \dots I_{k}) = (M^i_1, \dots, M^i_{m_i}).
\]
Thus, the first sum in \eqref{eq:Arelationrho} equals
\begin{equation} \label{eq:Arelationrho2}
\sum_{i=1}^{k-1}(D_{\phi(M^i_{m_i})} \circ \cdots \circ D_{\phi(M^i_1)} \circ D_{\phi(L^i_{\ell_i})} \circ \cdots \circ D_{\phi(L^i_1)}) (v).
\end{equation}
Furthermore, Lemma \ref{lemma:concatenate} implies that $\ell_i + m_i - 1 = j$ and
\[
(J_1, \dots, J_j) = (L^i_1, \dots, L^i_{\ell_i-1}, L^i_{\ell_i} M^i_1, M^i_2, \dots, M^i_{m_i}).
\]
In particular, the concatenation $L^i_{\ell_i} M^i_1$ equals either $21$, $32$, or $321$.

If $L^i_{\ell_i} = 3$ and $M^i_1 = 2$, then $D_{\phi(M^i_1)} \circ D_{\phi(L^i_{\ell_i})} = D_2 \circ D_1 = 0$ by \eqref{eq:reducedcoeffmaps}, and therefore the $i\Th$ term of \eqref{eq:Arelationrho2} vanishes. The same argument holds when $L^i_{\ell_i} = 2$ and $M^i_1 = 1$ using the fact that $D_3\circ D_2=0$.

If $L^i_{\ell_i} = 32$ and $M^i_1 = 1$, the concatenation $I_1 \cdots I_i$ ends in $32$, and we must have $i>1$, $\last(I_{i-1})=3$, $I_i = 2$, and $\first(I_{i+1}) =1$. Therefore,
\[
\Psi(I_1 \cdots I_{i-1}) = (L^i_1, \dots, L^i_{\ell_i-1}, 3) \quad \text{and}\quad
\Psi(I_i \cdots I_k) = (21, M^i_2, \dots, M^i_{m_i}).
\]
The sum of the $(i-1)\Th$ and $i\Th$ terms of \eqref{eq:Arelationrho2} then equals
\[
\sum_{i=1}^{k-1}(D_{\phi(M^i_{m_i})} \circ \cdots \circ D_{\phi(M^i_2)} \circ (D_3 \circ D_{12} + D_{23} \circ D_1) \circ D_{\phi(L^i_{\ell_i-1})} \circ \cdots \circ D_{\phi(L^i_1)}) (v),
\]
which vanishes by \eqref{eq:reducedcoeffmaps}.

If $L^i_{\ell_i} = 3$ and $M^{i_1}= 21$, a similar argument shows that the $i\Th$ and $(i+1)\Th$ terms of \eqref{eq:Arelationrho2} cancel. This completes the proof of \eqref{eq:Arelationrho}. Thus, the maps $m_k$ satisfy the $\AA_\infty$ relations.

For the second part of the theorem, as noted in the discussion following \eqref{eq:identitybimodule}, it suffices to show that
\[
(V, \delta_1) \simeq (V, \{m_k\}) \boxtimes \CFDD(\I).
\]
According to \cite[Proposition 10.1]{LOTBimodules}, the left-left $DD$ bimodule $\CFDD(\I)$ has generators $p,q$, with idempotent action given by
\[
(\iota_0 \otimes \iota_0) \cdot p = p \quad \text{and} \quad (\iota_1 \otimes \iota_1) \cdot q = q
\]
and structure map given by
\[
\delta_1 (p) = (\rho_1 \otimes \rho_3 + \rho_3 \otimes \rho_1 + \rho_{123} \otimes \rho_{123}) \otimes q \quad \text{and} \quad \delta_1 (q) = \rho_2 \otimes \rho_2 \otimes p.
\]
Thus, $(V,\{m_k\}) \boxtimes \CFDD(I)$ is isomorphic to $V$ as a vector space. According to the definition of the box tensor product of a type $A$ module and a type $DD$ bimodule, for $v \in V^0$, we have:
\[
\begin{aligned}
\delta_1(v \otimes p) &= \left( \rho_1 \otimes m_2(v, \rho_3) + \rho_3 \otimes m_2(v, \rho_1)  + \rho_{123}\otimes (m_2(v, \rho_{123}) + m_4(v, \rho_3, \rho_2, \rho_1)) \right) \otimes q \\
 & \qquad + \rho_{12} \otimes m_3(v, \rho_3, \rho_2) \otimes p
\\
&= \left(\rho_1 \otimes D_1(v) + \rho_3 \otimes D_3(v) + \rho_{123} \otimes (D_3 \circ D_2 \circ D_1(v) + D_{123}(v)) \right)\otimes q \\
 & \qquad + \rho_{12} \otimes D_{12}(v) \otimes p \\
&= (\rho_1 \otimes D_1(v) + \rho_3 \otimes D_3(v) + \rho_{123} \otimes D_{123}(v)) \otimes q + \rho_{12} \otimes D_{12}(v) \otimes p,
\end{aligned}
\]
where the final line follows from the fact that $D_2 \circ D_1 = D_3 \circ D_2 = 0$. Likewise, for $w \in V^1$,
\[
\begin{aligned}
\delta_1(w \otimes q) &= \rho_2 \otimes m_2(w, \rho_2) \otimes p + \rho_{23} \otimes m_3(w, \rho_2, \rho_1) \otimes q \\
&= \rho_2 \otimes D_2(w) \otimes p + \rho_{23} \otimes D_{23}(w) \otimes q.
\end{aligned}
\]
Thus, the differential on $(V,\{m_k\}) \boxtimes \CFDD(I)$ is equal to the original differential on $V$.
\end{proof}

\subsection{Bordered invariants of knot complements}

If $K$ is a knot in a homology sphere $Y$, and $X_K$ denotes the exterior of $K$, let $\phi_K\co T^2 \to \partial X_K$ be the orientation-reversing parametrization taking $S^1 \times \{\pt\}$ to a $0$-framed longitude of $K$ and $\{\pt\} \times S^1$ to a meridian of $K$. When $Y$ is an L-space, Lipshitz, Ozsv\'ath, and Thurston give a formula for $\CFD(X_K, \phi_K)$ in terms of the knot Floer complex of $(Y,K)$, which we now describe (adding a few details).

We begin by reviewing some facts about knot Floer homology, as defined in \cite{OSzKnot, RasmussenThesis}.  Let $C^- = \CFKm(Y,K)$ denote the knot Floer complex of $K$, a finitely generated free chain complex over $\F[U]$ with a bounded-above filtration
\[
\cdots \subset \FF_i \subset \FF_{i+1} \subset \cdots \subset C^-
\]
such that $U \cdot \FF_i \subset \FF_{i-1}$ for all $i$. The filtered chain homotopy type of this complex  is an invariant of the knot.

For any nonzero $x \in C^-$, let $A(x) = \min\{i \mid x \in \FF_i\}$; we call this the \emph{filtration level} or \emph{Alexander grading} of $x$. Multiplication by $U$ decreases the Alexander grading by one: $A(U \cdot x)=  A(x)-1$.
By convention,  $A(0) = -\infty$. We may assume that $C^-$ is \emph{reduced}, in the sense that for any $x \in C^-$, $\partial x = U \cdot y + z$, where $A(z) < A(x)$; this implies that
\[
\rank_{\F[U]} C^- = \dim_\F \HFK(Y,K) = 2n+1
\]
for some $n \ge 0$. The manner by which knot Floer homology detects the genus \cite[Theorem 1.2]{OSzGenus} implies that
\[
\FF_{g(K)-1} \subsetneq \FF_{g(K)} = C^-, \quad \FF_{-g(K)-1} \subset UC^-, \quad \FF_{-g(K)} \not\subset UC^-.
\]
Let $C^\infty = C^- \otimes_{\F[U]} \F[U,U^{-1}]$, and extend the filtration to $C^\infty$ accordingly. If $\{x_0, \dots, x_{2n}\}$ is a basis for $C^-$ over $\F[U]$, then $\{U^i x_k \mid k=0, \dots, 2n, \ i \in \Z\}$ is a basis for $C^\infty$ over $\F$. We may picture these basis elements living on the integer lattice in $\R^2$, with the element $U^k x_\ell$ at the point $(-k, A(x_\ell)-k)$. We refer to the coordinates in the plane as $i$ and $j$, and we identify $C^-$ with the subcomplex $C\{i \le 0\} \subset C^\infty$ generated by the basis elements at lattice points with $i \le 0$. The complexes $C\{i \le s\}$ ($s \in \Z$) provide a second filtration on $C^-$ and $C^\infty$.

Let $C^v = C^- / UC^-$ and $C^h = \FF_0(C^\infty) / \FF_{-1}(C^\infty)$, and let $\partial^v$ and $\partial^h$ denote respective induced differentials. We refer to $(C^v, \partial^v)$ and $(C^h, \partial^h)$ as the {\em vertical} and {\em horizontal complexes}, respectively.

The associated graded object of $C^-$ (with respect to the original filtration) is the free $\F[U]$-module
\[
\gr(C^-) = \bigoplus_{i \in \Z} \FF_i / \FF_{i-1},
\]
with the induced multiplication by $U$. Note that the direct sum is as an $\F$-vector space, and not as an $\F[U]$-module since multiplication by $U$ decreases the filtration by one.  For $x \in C^-$, let $[x] \in \gr(C^-)$ denote the image of $x$ in $\FF_{A(x)}/\FF_{A(x)-1}$. Note that $[Ux] = U[x]$. A basis $\{x_0, \dots, x_{2n}\}$ for $C^-$ is called a \emph{filtered basis} if $\{[x_0], \dots, [x_{2n}]\}$ is a basis for $\gr(C^-)$ over $\F[U]$. Any two filtered bases $\{x_0, \dots, x_{2n}\}$ and $\{x'_0, \dots, x'_{2n}\}$ are related by a \emph{filtered change of basis}: if $x_i = \sum_j a_{ij} x'_j$ and $x'_i = \sum_j b_{ij} x_j$, where $a_{ij}, b_{ij} \in \F[U]$, then
\[
A(a_{ij} x'_j) \le A(x_i) \quad \text{and} \quad  A(b_{ij} x_j) \le A(x'_i) \quad \text{for all } i,j.
\]
In particular, if $a_{ij} \not\equiv 0 \pmod {UC^-}$, then $A(x'_j) \le A(x_i)$, and similarly for $b_{ij}$.

A key tool for our main theorem is a formula which expresses $\CFD(X_K)$ in terms of $\CFKm(Y,K)$.  The most useful way to express this formula is by picking a basis for $\CFKm(Y,K)$ and describing $\CFD(X_K)$ in terms of this basis.  To do this it will be useful to have particularly nice bases for $\CFKm$, whose definitions we now recall.

\begin{definition}\label{def:vertbasis}
A filtered basis $\{\xi_0, \dots, \xi_{2n}\}$ for $C^-$ over $\F[U]$ is called {\em vertically simplified} if, for  $j=1, \dots, n$,
\[
A(\xi_{2j-1}) - A(\xi_{2j}) = k_j > 0\quad \text{and}\quad \partial \xi_{2j-1} \cong \xi_{2j} \pmod {UC^-},
\]
while for $p = 0, 1, \dots, n$, we have
\[
\partial \xi_{2p} \in UC^-.
\]
We say that there is a \emph{vertical arrow of length $k_j$ from $\xi_{2j-1}$ to $\xi_{2j}$} and that $\xi_0$ is the \emph{generator of vertical homology}.
\end{definition}

The name is motivated by the fact that in a vertically simplified basis the differential on the vertical complex $(C^v, \partial^v)$ is particularly simple; indeed, in a vertically simplified basis $\partial^v$ can be represented by a collection of vertical arrows which pair up the even and odd basis elements, and where $\xi_0$ has no incoming or outgoing arrows.  Similarly, for the horizontal complex we have

\begin{definition}\label{def:horbasis}
A filtered basis $\{\eta_0, \dots, \eta_{2n}\}$ for $C^-$ over $\F[U]$ is called {\em horizontally simplified} if, for $j = 1, \dots, n$,
\[
A(\eta_{2j}) - A(\eta_{2j-1}) = \ell_j > 0\quad  \text{and}\quad \partial \eta_{2j-1} \equiv U^{\ell_j} \eta_{2j} \pmod {\FF_{A(\eta_{2j-1})-1}},
\]
while for $p = 0, 1, \dots, n$, we have
\[
A(\partial \eta_{2p}) < A( \eta_{2p}).
\]
We say that there is a \emph{horizontal arrow of length $\ell_j$ from $\eta_{2j-1}$ to $ \eta_{2j}$} and that $\eta_0$ is the \emph{generator of horizontal homology}.
\end{definition}

Lipshitz, Ozsv\'ath, and Thurston showed that $C^-$ always admits both horizontally and vertically simplified bases \cite[Proposition 11.52]{LOTBordered}. Furthermore, for any vertically simplified basis $\{\xi_0, \dots, \xi_{2n}\}$ and horizontally simplified basis $\{\eta_0, \dots, \eta_{2n}\}$, the unordered tuples $\{k_1, \dots, k_n\}$ and $\{\ell_1, \dots, \ell_n\}$ are equal; this follows from the symmetry of knot Floer homology under reversing the knot orientation \cite[Section 3.5]{OSzKnot}.

Two particularly useful derivatives of the filtered chain homotopy type of $C^\infty$ can be expressed easily in terms of a vertical or horizontally simplified basis.  The first is the Ozsv\'ath-Szab\'o  concordance invariant \cite{OSz4Genus, RasmussenThesis}.  Denoted $\tau(K)$, this invariant is a homomorphism from the smooth concordance group to the integers which bounds the smooth $4$-genus:
\[
|\tau(K)| \le g_4(K).
\]
In terms of a vertically simplified basis, we have
\[
\tau(K)=A(\xi_0),
\]
while in terms a horizontally simplified basis we have
\[
\tau(K)=-A(\eta_0).
\]
The latter equality again follows from the orientation reversal symmetry.

The second invariant we derive from $C^\infty$ is Hom's invariant $\epsilon(K)\in \{-1,0,1\}$, which captures whether the four-dimensional cobordisms obtained by attaching two-handles to $Y$ along $K$ induce nontrivial maps on Floer homology in certain Spin$^c$-structures \cite[Definition 3.1]{HomComplex}. This invariant can also be expressed in terms of vertically and horizontally simplified bases. Let $[\eta_0]$ denote the image of $\eta_0$ in the vertical complex $C^v$. Also, viewing $\xi_0$ as an element of $C^\infty$, the chain $\xi_0' = U^{A(\xi_0)} \xi_0$ is in $\FF_0$, so we may consider its image $[\xi_0']$ in the horizontal complex $C^h$. Then:
\begin{itemize}
\item If $\epsilon(K) = -1$, then $\partial^v[\eta_0] \ne 0$ and $\partial^h[\xi_0'] \ne 0$.

\item If $\epsilon(K) = 0$, then $[\eta_0] \in \ker\partial^v \minus \im\partial^v$ and $[\xi_0'] \in \ker \partial^h \smallsetminus \im \partial^h$.

\item If $\epsilon(K) = 1$, then $[\eta_0] \in \im\partial^v$ and $[\xi_0'] \in \im\partial^h$.
\end{itemize}

%

The following proposition tells us that the change of basis passing between horizontally and vertically simplified bases can be assumed to be relatively well-behaved.

\begin{proposition} \label{prop:bases}
There exist filtered bases $\{\xi_0, \dots, \xi_{2n}\}$ and $\{\eta_0, \dots, \eta_{2n}\}$ for $C^-$ over $\F[U]$ with the following properties:
\begin{enumerate}
\item \label{item:vertsimp}
$\{\xi_0, \dots, \xi_{2n}\}$ is a vertically simplified basis.

\item \label{item:horizsimp}
$\{\eta_0, \dots, \eta_{2n}\}$ is a horizontally simplified basis.

%

\item \label{item:epsilon}
If $\epsilon(K) = -1$, then $\xi_0 = \eta_1$. If $\epsilon(K) = 0$, then $\xi_0 = \eta_0$. If $\epsilon(K) = 1$, then $\xi_0 = \eta_2$.

\item \label{item:changeofbasis}
If
\[
\xi_p = \sum_{q=0}^{2n} a_{p,q} \eta_q \quad \text{and} \quad \eta_p = \sum_{q=0}^{2n} b_{p,q} \xi_q,
\]
where $a_{p,q}, b_{p,q} \in \F[U]$, then
\[
a_{p,q}=0 \quad \text{whenever } A(\xi_p) \ne A(a_{p,q} \eta_q),
\]
and
\[
b_{p,q} = 0 \quad \text{whenever } A(\eta_p) \ne A(b_{p,q} \xi_q).
\]
In other words, each $\xi_p$ is an $\F[U]$-linear combination of the elements $\eta_q$ that are the same filtration level as $\xi_p$, and vice versa.
\end{enumerate}
\end{proposition}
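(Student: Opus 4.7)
The plan is to construct $\{\xi_i\}$ and $\{\eta_i\}$ in three stages: obtain a vertically simplified basis, then modify it via Alexander-grading-preserving changes of basis to a horizontally simplified one, and finally reindex to match the $\epsilon(K)$ condition.

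First, apply \cite[Proposition 11.52]{LOTBordered} to produce any vertically simplified basis $\{\xi_0,\dots,\xi_{2n}\}$ of $C^-$, which secures (\ref{item:vertsimp}). Second, I would obtain a horizontally simplified basis $\{\eta_0,\dots,\eta_{2n}\}$ by Gaussian elimination on the horizontal differential $\partial^h$ of the horizontal complex $C^h = \FF_0(C^\infty)/\FF_{-1}(C^\infty)$. The horizontal complex is graded by the $i$-coordinate in the planar model of $C^\infty$, which, under the identification of $U^{A(\xi_k)}\xi_k \in C^h$ with its representative $\xi_k$, corresponds exactly to the Alexander grading of the representatives in $C^-$. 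Consequently, each Gaussian elimination step on $\partial^h$ lifts to a filtered change of basis of $C^-$ whose matrix mixes only basis elements of equal Alexander grading. Iterating these moves produces a horizontally simplified basis $\{\eta_i\}$ such that the change of basis from $\{\xi_i\}$ satisfies property (\ref{item:changeofbasis}).

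Third, recall that $\xi_0$ generates the vertical homology while $\eta_0$ generates the horizontal homology. Hom's characterization of $\epsilon(K)$, as recalled above, distinguishes three cases: if $\epsilon(K)=0$, the distinguished horizontal class survives as a nontrivial vertical cycle, so after an Alexander-grading-matched substitution one can take $\xi_0 = \eta_0$; if $\epsilon(K)=1$, a vertical arrow of the $\xi$-basis hits a representative of the horizontal generator, so after relabeling the pair containing that arrow as $(\xi_1,\xi_2)$, one can arrange $\xi_0 = \eta_2$; the case $\epsilon(K)=-1$ is symmetric (with the roles of horizontal and vertical interchanged), yielding $\xi_0 = \eta_1$. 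Each such adjustment is a reindexing or an Alexander-grading-preserving linear substitution, hence compatible with (\ref{item:changeofbasis}).

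The main obstacle lies in the second stage: verifying that the horizontal simplification can be executed entirely through Alexander-grading-preserving changes of basis without ever coupling elements across filtration levels. Horizontal arrows themselves span different Alexander gradings (by their length), so the constraint is not that arrow endpoints stay at fixed filtration, but rather that the extraneous corrections used to cancel secondary terms lie at the same filtration level as the basis element being adjusted. Tracking this interaction carefully --- especially when a basis element participates in multiple horizontal relations and must be modified coherently --- is the technical heart of the argument; the reindexing in stage three must be arranged so as not to disturb the structure established previously.
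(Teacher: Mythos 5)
There is a genuine gap, and in fact two. First, your stage two is the crux of the proposition and you do not actually prove it: you assert that Gaussian elimination on $\partial^h$ lifts to changes of basis of $C^-$ that are homogeneous with respect to the Alexander filtration, and then in your final paragraph you concede that verifying this (especially when a generator enters several horizontal relations) is ``the technical heart of the argument'' without supplying it. Note also that the lifting claim as stated is imprecise: property \eqref{item:changeofbasis} does not say the matrix mixes elements of \emph{equal} Alexander grading, but that each term $a_{p,q}\eta_q$ sits at exactly the filtration level $A(\xi_p)$, so corrections involve higher-filtration generators multiplied by positive powers of $U$; and an elimination step in $C^h$ whose correcting element has strictly smaller Alexander grading would require a negative power of $U$ and does not lift to $C^-$ at all. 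The paper's proof handles this issue by a different and more economical device: it starts from vertically and horizontally simplified bases already satisfying \eqref{item:epsilon} (Hom, Lemmas 3.2--3.3 of the cited paper), writes the filtered change of basis $\eta'_p=\sum_q b'_{p,q}\xi_q$, and simply \emph{truncates} it, discarding every term with $A(b'_{p,q}\xi_q)<A(\eta'_p)$. One then checks that the discarded piece $\Delta_p=\eta'_p-\eta_p$ has strictly lower filtration, so $\partial\eta_{2j-1}\equiv U^{\ell_j}\eta_{2j}$ modulo $\FF_{A(\eta_{2j-1})-1}$ still holds and the new basis remains horizontally simplified, while block-diagonality of the truncated matrix gives \eqref{item:changeofbasis}. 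Your proposal contains no analogue of this truncation step, which is exactly the missing verification you flagged.

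Second, stage three underestimates condition \eqref{item:epsilon}. Requiring the vertical homology generator $\xi_0$ to be \emph{literally equal} to $\eta_0$, $\eta_1$, or $\eta_2$ of a horizontally simplified basis is not a matter of reindexing or an easy grading-matched substitution; it is the content of Hom's Lemmas 3.2 and 3.3, which the paper invokes as a substantive input before doing anything else. Moreover, performing such an adjustment \emph{after} your stage two risks destroying either horizontal simplified-ness or the homogeneity of the change of basis, and you give no argument that it does not. The paper sidesteps this ordering problem by securing \eqref{item:epsilon} first and observing that the truncation fixes the distinguished element (if $\xi_0=\eta'_i$ then $\eta_i=\eta'_i$), so \eqref{item:epsilon} survives automatically. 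To repair your argument you would either need to prove your stage-two claim in full (including the liftability of every elimination step) and then redo the $\epsilon$-matching compatibly, or adopt the paper's route: cite Hom for \eqref{item:vertsimp}--\eqref{item:epsilon} and prove \eqref{item:changeofbasis} by truncating the change-of-basis matrix.
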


\begin{proof}
According to Hom \cite[Lemmas 3.2, 3.3]{HomTau}, we may find vertically and horizontally simplified bases $\{\xi_0, \dots, \xi_{2n}\}$ and $\{\eta'_0, \dots, \eta'_{2n}\}$ satisfying \eqref{item:epsilon} (with $\eta_i$ replaced by $\eta'_i$). We shall modify the latter basis to produce a new basis $\{\eta_0, \dots, \eta_{2n}\}$ satisfying the conclusions of the proposition.

As above, any two filtered bases are related by a filtered change of bases, so let
\[
\xi_p = \sum_{q=0}^{2n} a'_{p,q} \eta'_q \quad \text{and} \quad \eta'_p = \sum_{q=0}^{2n} b'_{p,q} \xi_q.
\]
be filtered change of bases. That is, for all $p, q \in \{0, \dots, 2n\}$, we have
\[
A(a'_{p,q} \eta'_q) \le A(\xi_p)  \quad \text{and} \quad A(b'_{p,q} \xi_q) \le A(\eta'_p).
\]
Let
\[
b_{p,q} = \begin{cases} b'_{p,q} &  \text{if}\ A(b'_{p,q} \xi_q) = A(\eta'_p)  \\
0 &  \text{if}\ A(b_{p,q} \xi_q) < A(\eta'_p), \end{cases}
\]
and define
\[
\eta_p = \sum_{q=0}^{2n} b_{p,q} \xi_q \quad \text{and} \quad \Delta_p = \eta'_p - \eta_p.
\]
Note that $A(\eta_p) = A(\eta'_p)$, while $A(\Delta_p) < A( \eta'_p)$. The change-of-basis matrix $(b_{p,q})$ is in block-diagonal form (after reordering rows and columns according to filtration level), so its inverse is as well. Thus the bases $\{\xi_0, \dots, \xi_{2n}\}$ and $\{\eta_0, \dots, \eta_{2n}\}$ satisfy \eqref{item:changeofbasis}. Furthermore, if $i \in \{0,1,2\}$ is the index for which $\xi_0 = \eta_i'$, then $\eta_i = \eta_i'$ by construction, so \eqref{item:epsilon} also holds.

It remains to show that the basis $\{\eta_0, \dots, \eta_{2n}\}$ is horizontally simplified. We have, for any $j = 1, \dots, n$,
\[
\begin{aligned}
\partial \eta_{2j-1} &= \partial \eta'_{2j-1} - \partial \Delta_{2j-1} \\
&\equiv \partial \eta'_{2j-1} \pmod {\FF_{A(\eta'_{2j-1})-1}} \\
&\equiv U^{\ell_j} \eta'_{2j} \pmod {\FF_{A(\eta'_{2j-1})-1}} \\
&= U^{\ell_j} \eta_{2j} + U^{\ell_j} \Delta_{2j} \\
&\equiv U^{\ell_j} \eta_{2j} \pmod {\FF_{A(\eta_{2j-1})-1}},
\end{aligned}
\]
where the last line follows from the fact that
\[
A(U^{\ell_j} \Delta_{2j}) = A(\Delta_{2j}) - \ell_j < A(\eta'_{2j}) - \ell_j = A(\eta'_{2j-1}) = A(\eta_{2j-1}).
\]
Likewise, for $j = 0, 1, \dots, n$, we have
\[
\partial \eta_{2j} = \partial \eta'_{2j} + \partial \Delta_{2j} \in \FF_{A(\eta'_{2j})-1},
\]
as required.
\end{proof}

For the remainder of this section, choose vertically and horizontally simplified bases $\{\tilde \xi_0, \dots, \tilde \xi_{2n}\}$ and $\{\tilde \eta_0, \dots, \tilde \eta_{2n}\}$ for $\CFKm(Y,K)$ satisfying the conclusions of Proposition \ref{prop:bases} above.\footnote{We use tildes for the generators of $\CFKm(Y,K)$ in order to distinguish them from the corresponding elements of $\CFD(X_K)$.} Assume that
\[
\tilde\xi_p = \sum_{q=0}^{2n} \tilde a_{p,q} \tilde\eta_q \quad \text{and} \quad \tilde \eta_p = \sum_{q=0}^{2n} \tilde b_{p,q} \tilde \xi_q,
\]
where $\tilde a_{p,q}, \tilde b_{p,q} \in \F[U]$, and let
\[
a_{p,q} = \tilde a_{p,q} |_{U=0} \quad \text{and} \quad b_{p,q} = \tilde b_{p,q} |_{U=0}.
\]
According to \cite[Theorem 11.27 and Theorem A.11]{LOTBordered}, $\CFD(X_K, \phi_K)$ is completely determined by the lengths of the arrows (i.e., $k_j$ and $\ell_j$), $\tau(K)$, and the change-of-basis matrix $(a_{p,q})$, as follows.

\begin{theorem} \label{thm:cfkcfd}
With notation as above, $\CFD(X_K)$ satisfies the following properties:

\begin{itemize}
\item The summand $\iota_0 \CFD(X_K)$ has dimension $2n+1$, with designated bases $\{\xi_0, \dots, \xi_{2n}\}$ and $\{\eta_0, \dots, \eta_{2n}\}$ related by
\[
\xi_p = \sum_{q=0}^{2n} a_{p,q} \eta_q \quad \text{and} \quad \eta_p = \sum_{q=0}^{2n} b_{p,q} \xi_q.
\]

\item The summand $\iota_1 \CFD(X_K)$ has dimension $\sum_{j=1}^n (k_j + l_j) + t$, where $t = 2 \abs{\tau(K)}$, with basis
\[
\bigcup_{j=1}^n \{\kappa^j_1, \dots, \kappa^j_{k_j}\} \cup \bigcup_{j=1}^n \{\lambda^j_1, \dots, \lambda^j_{l_j}\} \cup \{\mu_1, \dots, \mu_t\} .
\]

\item For $j=1,\dots, n$, corresponding to the vertical arrow $\tilde\xi_{2j-1} \to \tilde\xi_{2j}$ of length $k_j$, there are coefficient maps
\begin{equation} \label{eq:vertchain}
\xi_{2j} \xrightarrow{D_{123}} \kappa^j_1 \xrightarrow{D_{23}} \cdots \xrightarrow{D_{23}} \kappa^j_{k_j} \xleftarrow{D_1} \xi_{2j-1}.
\end{equation}

\item For $j=1, \dots, n$, corresponding to the horizontal arrow $\tilde\eta_{2j-1} \to \tilde\eta_{2j}$ of length $l_j$, there are coefficient maps
\begin{equation} \label{eq:horizchain}
\eta_{2j-1} \xrightarrow{D_3} \lambda^j_1 \xrightarrow{D_{23}} \cdots \xrightarrow{D_{23}} \lambda^j_{l_j} \xrightarrow{D_2} \eta_{2j},
\end{equation}

\item Depending on $\tau(K)$, there are additional coefficient maps
\begin{equation} \label{eq:unstchain}
\begin{cases}
\eta_0 \xrightarrow{D_3} \mu_1 \xrightarrow{D_{23}} \cdots \xrightarrow{D_{23}} \mu_t \xleftarrow{D_1} \xi_0 & \tau(K)>0  \\
\xi_0 \xrightarrow{D_{12}} \eta_0 & \tau(K) =0 \\
\xi_0 \xrightarrow{D_{123}} \mu_1 \xrightarrow{D_{23}} \cdots \xrightarrow{D_{23}} \mu_t \xrightarrow{D_2} \eta_0 & \tau(K) < 0.
\end{cases}
\end{equation}
\end{itemize}
We refer to the subspaces of $\CFD(X_K)$ spanned by the generators in \eqref{eq:vertchain}, \eqref{eq:horizchain}, and \eqref{eq:unstchain} as the \emph{vertical chains}, \emph{horizontal chains}, and \emph{unstable chain}, respectively.\footnote{Note that our notation differs slightly from that of \cite{LOTBordered}: the generators $\kappa^j_1, \dots, \kappa^j_{k_j}$ are indexed in the reverse order, as are $\mu_1, \dots, \mu_t$ in the case where $\tau(K)>0$.}
\end{theorem}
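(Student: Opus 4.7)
The plan is to build on the structure theorem of Lipshitz--Ozsv\'ath--Thurston (Theorems 11.27 and A.11 of \cite{LOTBordered}), which already describes $\CFD(X_K, \phi_K)$ when $Y$ is an L-space in terms of a simplified basis for $\CFKm(Y,K)$. What remains to verify is essentially that the symmetric presentation given here --- with both a vertically simplified and a horizontally simplified basis playing distinguished roles --- is a faithful repackaging of that description.

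First, I would invoke Proposition \ref{prop:bases} to fix compatible bases $\{\tilde\xi_p\}$ and $\{\tilde\eta_p\}$ of $\CFKm(Y,K)$. Labeling the corresponding generators of $\iota_0 \CFD(X_K)$ produced by the LOT formula as $\xi_p$, I would then define $\eta_p = \sum_{q} b_{p,q} \xi_q$ where $b_{p,q} = \tilde b_{p,q}|_{U=0}$. Because the change-of-basis matrix $(\tilde b_{p,q})$ is a filtered isomorphism, its reduction $(b_{p,q})$ mod $U$ is invertible over $\F$, so the $\eta_p$ genuinely form a second basis of $\iota_0 \CFD(X_K)$ with inverse change of basis $(a_{p,q})$. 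This establishes the first bullet point of the theorem.

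Second, I would read off the coefficient maps. The vertical chains \eqref{eq:vertchain} are immediate from the LOT description in the $\tilde\xi$ basis. The horizontal chains \eqref{eq:horizchain} are produced by LOT in terms of the $\tilde\eta$ basis; after translating through the change of basis to our $\eta_p$, they take exactly the stated form, with no interference from the vertical data because Proposition \ref{prop:bases}\eqref{item:changeofbasis} guarantees the change-of-basis matrix is block-diagonal with respect to filtration level, so cross-terms between distinct filtration levels vanish. The unstable chain \eqref{eq:unstchain} is the most delicate case: its shape depends on the sign of $\tau(K)$, and Proposition \ref{prop:bases}\eqref{item:epsilon} pins down the precise relationship between $\xi_0$ and $\eta_0$ in terms of $\epsilon(K)$, which in turn determines the length, orientation, and endpoints of the arrows in the unstable chain. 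Counting generators ($t = 2|\tau(K)|$ in the $\tau(K)\neq 0$ cases, and the single $D_{12}$ arrow with no intermediate generator when $\tau(K) = 0$) then matches the dimension count $\sum_j(k_j + \ell_j) + t$ for $\iota_1 \CFD(X_K)$.

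The main obstacle is the bookkeeping for the unstable chain and the change of basis between horizontal and vertical data. One must rule out extraneous coefficient maps from the change-of-basis process --- the key input is Proposition \ref{prop:bases}\eqref{item:changeofbasis}, which kills any cross-terms between different filtration levels that could otherwise pollute the formulas \eqref{eq:horizchain}--\eqref{eq:unstchain}. With that in hand, the theorem follows by direct translation of LOT's output into the two-basis framework.
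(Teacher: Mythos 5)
Your proposal is correct and follows the paper's route: Theorem \ref{thm:cfkcfd} is not proved independently there, but is quoted from Lipshitz--Ozsv\'ath--Thurston (Theorems 11.27 and A.11 of \cite{LOTBordered}) applied to bases chosen via Proposition \ref{prop:bases}, with only the notational translation you describe. One small correction: the shape of the unstable chain (its length $2\abs{\tau(K)}$ and the direction of its arrows) is dictated by $\tau(K)$ and the $0$-framing as part of LOT's statement, not by the $\epsilon(K)$ clause of Proposition \ref{prop:bases}, which is only needed later in the paper (for Propositions \ref{prop:BK-D} through \ref{prop:VK-A}).
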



As described in \cite[Lemma 11.40]{LOTBordered}, $\CFD(X_K)$ admits a grading by half-integers, taking integer values on $\iota_0 \CFD(X_K)$ and non-integer values on $\iota_1 \CFD(X_K)$:
\begin{equation} \label{eq:sgrading}
\iota_0 \CFD(X_K) = \bigoplus_{s \in \Z} \CFD(X_K, s) \quad \text{and} \quad \iota_1 \CFD(X_K) = \bigoplus_{s \in \Z+\frac12} \CFD(X_K,s).
\end{equation}
We refer to this grading as the \emph{Alexander grading}. This is justified since there are canonical identifications:
\[
\iota_0 \CFD(X_K) \cong \HFK(Y,K) \quad \text{and} \quad \iota_1 \CFD(X_K) \cong \HFL(K),
\]
where the latter invariant is the longitude Floer homology \cite{EftekharyWhitehead}. Under these identifications the grading on the summands of $\CFD(X_K)$ agrees with the Alexander gradings on each of these groups. Proposition \ref{prop:bases} implies that the Alexander gradings (in $\CFD(X_K)$) of $\xi_0, \dots, \xi_{2n}$ and $\eta_0, \dots, \eta_{2n}$ are equal to the filtration levels (in $\CFKm(Y,K)$) of $\tilde\xi_0, \dots, \tilde\xi_{2n}$ and $\tilde\eta_0, \dots, \tilde\eta_{2n}$, respectively, and that the change of basis is homogeneous. We denote the grading of a homogeneous element $x$ by $A(x)$, and for each $s \in \frac12 \Z$, let $\pi_s\co \CFD(X_K) \to \CFD(X_K, s)$ be the projection map coming from \eqref{eq:sgrading}. The Alexander grading on $\CFD(X_K)$ will eventually enable us to isolate certain pieces of the chain complex for a spliced manifold. As seen in \cite{LOTBordered}, the coefficient maps on $\CFD(X_K)$ are all homogeneous with respect to $A$, with the following degrees:

\begin{center}  \begin{tabular} {| c || c | c | c | c | c | c |}
   \hline
    \text{Coefficient map} & $D_1$ & $D_2$ & $D_3$ & $D_{12}$ & $D_{23}$ & $D_{123}$ \\ \hline
    $A$--degree & $-\frac12$ & $\frac12$ &$ \frac12$ & $0$ &$1$ &$ \frac12 $\\ \hline
  \end{tabular}
\end{center}


Note that $\CFD(X_K)$ need not be bounded; for instance, if $K$ is the unknot, then $\CFD(X_K)$ has a single generator $\xi$, with $\delta_1(\xi) = \rho_{12} \otimes \xi$, and therefore $\delta_k(\xi) = \rho_{12} \otimes \dots \otimes \rho_{12} \otimes \xi$ for all $k \ge 1$. To avoid this issue, we may introduce a modified version of $\CFK(X_K)$ when $\tau(K)=0$.\footnote{We are grateful to Jonathan Hanselman for pointing out this argument, which also appears in his forthcoming preprint.} Let $\CFD(X_K)'$ be just as in Theorem \ref{thm:cfkcfd}, except that the unstable chain in the $\tau(K)=0$ case is replaced with
\[
\xi_0 \xrightarrow{D_1} \nu_1 \xleftarrow{D_\emptyset} \nu_2 \xrightarrow{D_2} \eta_0.
\]
The generators $\nu_1$ and $\nu_2$ have Alexander grading $-\frac12$. It is easy to verify that $\CFD(X_K')$ is chain homotopy equivalent as a type-$D$ module to $\CFD(X_K)$.

\begin{lemma} \label{lemma:bounded}
For any knot $K \subset S^3$, $\CFD(X_K)'$ is bounded.
\end{lemma}

\begin{proof}
We must show that every sufficiently long composition of the coefficient maps on $\CFD(X_K)$ is zero. Note that the vertical chains are closed with respect to any of the coefficient maps: for any $I \in \RR$, $D_I ( \Span(\kappa^j_1, \dots, \kappa^j_{k_j}) \subset \Span(\kappa^j_1, \dots, \kappa^j_{k_j})$. Also, the only compositions of coefficient maps whose restrictions to $\Span(\kappa^j_1, \dots, \kappa^j_{k_j})$ are nonzero are $D_{23}^m$ for $m <k_j$. An analogous statement is true for the unstable chain when $\tau(K)\ge 0$. Therefore, for the purpose of boundedness, it suffices to consider only sequences of coefficient maps made from the horizontal chains and from the unstable chain when $\tau(K) < 0$. Since these maps do not involve $D_1$ or $D_{12}$,  they all increase the Alexander grading by $\frac12$ or $1$. Since the Alexander gradings of all elements are between $-g(K)$ and $g(K)$, this implies that any composition of more than $4g$ coefficient maps is equal to zero.
\end{proof}


\section{Splicing knot complements}


For any knots $K_1 \subset Y_1$ and $K_2 \subset Y_2$, note that the composition
\[
\phi_{K_2} \circ r \circ \phi_{K_1}^{-1} \co \partial X_{K_1} \to \partial X_{K_2}
\]
is orientation-reversing and takes a $0$-framed longitude of $K_1$ to a meridian of $K_2$ and a meridian of $K_1$ to a $0$-framed longitude of $K_2$. The manifold gotten by gluing $X_{K_1}$ and $X_{K_2}$ via $\phi_{K_2} \circ r \circ \phi_{K_1}^{-1}$ is thus precisely $Y(K_1,K_2)$, as defined in the introduction. Therefore, we have
\[
\HF(Y(K_1,K_2)) = H_*(\CFA(X_{K_1}, \phi_{K_1} \circ r) \mathbin{\tilde\otimes} \CFD(X_{K_2}, \phi_{K_2})).
\]
(Henceforth, we suppress the parametrizations from the notation.) Since $\CFD(X_{K_2}) \simeq \CFD(X_{K_2})'$ and the latter is bounded, there are homotopy equivalences
\begin{align*}
\CFA(X_{K_1}) \mathbin{\tilde\otimes} \CFD(X_{K_2}) &\simeq \CFA(X_{K_1}) \mathbin{\tilde\otimes} \CFD(X_{K_2})' \\
&\simeq \CFA(X_{K_1}) \boxtimes \CFD(X_{K_2})'.
\end{align*}
We obtain $\CFA(X_{K_1})$ by applying Theorem \ref{thm:identity} to $\CFD(X_{K_1})$; we do not need to use the bounded version.

Our strategy will be to describe, for any knot $K$ in an L-space homology sphere $Y$, the behavior of those elements of $\CFD(X_K)$ and $\CFD(X_K)'$ that come from the part of knot Floer homology in lowest Alexander grading, $\HFK(Y,K, -g(K))$. We will then use Theorem \ref{thm:identity} to describe the corresponding elements of $\CFA(X_K)$. The tensor products of these elements will give rise to the homology classes in
\[
H_*(\CFA(X_{K_1}) \boxtimes \CFD(X_{K_2})')
\]
needed for Theorem \ref{thm:main}.

Assume, for the duration of this section, that we have bases $\{\tilde \xi_0,\dots, \tilde \xi_{2n}\}$ and $\{\tilde \eta_0, \dots, \tilde \eta_{2n}\}$ for $\CFKm(Y,K)$ just as in Theorem \ref{thm:cfkcfd}. We begin by considering the basis elements that have Alexander grading equal to $-g(K)$.

By the definitions of vertically and horizontally reduced bases, for any $j \in \{1, \dots, n\}$, neither $\tilde\xi_{2j-1}$ nor $\tilde\eta_{2j}$ can have Alexander grading equal to $-g(K)$, since that would require $\tilde \xi_{2j}$ or $\tilde \eta_{2j-1}$ to have Alexander grading less than $-g(K)$. Furthermore, if $K$ is a nontrivial knot and $A(\tilde \xi_0) = -g(K)$, then $\tau(K) = -g(K)$ and $\epsilon(K) = -1$, since $\tilde\xi_0$ is congruent modulo $UC^-$ to a linear combination of $\tilde \eta_1, \dots, \tilde \eta_{2j-1}$. Likewise, if $A(\tilde \eta_0) = -g(K)<0$, then $\tau(K) = g(K)$ and $\epsilon(K) = 1$.

\begin{lemma} \label{lemma:length1}
If $A(\tilde\eta_{2j-1}) = -g(K)$ and $\ell_j = 1$, then $\tilde\eta_{2j}$ is congruent modulo $UC^-$ to a linear combination of $\tilde\xi_0,\tilde\xi_2, \tilde\xi_4, \dots, \tilde\xi_{2n}$. Furthermore, the coefficient of $\tilde\xi_0$ is zero unless  $\tau(K) = -g(K)+1$ and $g(K)>1$.
\end{lemma}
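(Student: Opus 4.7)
The plan is to exploit the length-1 horizontal-arrow equation $\partial\tilde\eta_{2j-1} \equiv U\tilde\eta_{2j} \pmod{\FF_{-g(K)-1}}$, together with the key containment $\FF_{-g(K)-1} \subset UC^-$ (valid because no basis element sits below filtration $-g(K)$), to produce an equation
\[
\partial\tilde\eta_{2j-1} = U(\tilde\eta_{2j} + \omega'), \qquad \omega' \in \FF_{-g(K)},
\]
using that $C^-$ is $U$-torsion-free. Dividing by $U$ and reducing modulo $UC^-$ should then yield an explicit description of $\tilde\eta_{2j}$ in the vertical basis.

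First I would expand $\tilde\eta_{2j-1}$ in the vertical basis, using Proposition \ref{prop:bases}\eqref{item:changeofbasis} to write each coefficient as a monomial $b_q\, U^{A(\tilde\xi_q)+g(K)}$ with $b_q \in \F$. The crucial filtration remark is that for $p \ge 1$ the odd-indexed basis element $\tilde\xi_{2p-1}$ cannot have Alexander grading $-g(K)$ (else the vertical arrow would push $\tilde\xi_{2p}$ below the minimum filtration level), so every odd-indexed contribution to the expansion carries a strictly positive power of $U$. Applying $\partial$ via the identities $\partial \tilde\xi_{2p-1} = \tilde\xi_{2p} + U\gamma_{2p-1}$ and $\partial \tilde\xi_{2p} = U\gamma_{2p}$ coming from the vertically simplified structure, and using $U$-torsion-freeness to divide by $U$, I expect to arrive at
\[
\tilde\eta_{2j} + \omega' \;\equiv\; \sum_{\substack{p\ge 1\\A(\tilde\xi_{2p-1})=-g(K)+1}} b_{2p-1}\,\tilde\xi_{2p}\;+\; \sum_{\substack{p\ge 0\\A(\tilde\xi_{2p})=-g(K)}} b_{2p}\,\gamma_{2p} \pmod{UC^-}.
\]
The first assertion of the lemma would then follow from three observations: the $\tilde\xi_{2p}$ in the first sum are visibly even-indexed; each $\gamma_{2p}$ is a cycle in $C^-$ (since $\partial^2=0$ combined with $U$-torsion-freeness forces $\partial\gamma_{2p}=0$), so its reduction mod $UC^-$ is a cycle in $C^v$, and in a vertically simplified basis the cycles of $C^v$ are spanned by the even-indexed $\tilde\xi_q$; and $\omega'$ mod $UC^-$ is supported at Alexander grading $-g(K)$, where only even-indexed basis elements appear.

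For the claim about the coefficient of $\tilde\xi_0$: Proposition \ref{prop:bases}\eqref{item:changeofbasis} immediately gives that this coefficient vanishes whenever $A(\tilde\xi_0) \ne A(\tilde\eta_{2j})$, i.e., whenever $\tau(K) \ne -g(K)+1$. The one remaining scenario to exclude is $g(K)=1$, in which $\tau(K)=0$, $\epsilon(K)=0$, and $\tilde\xi_0 = \tilde\eta_0$ by Proposition \ref{prop:bases}\eqref{item:epsilon}. Here the plan is to invoke the L-space hypothesis: $H_*(C^-) = \HFm(Y) \cong \F[U]$ is torsion-free, so passing to homology in $U(\tilde\eta_{2j}+\omega') = \partial\tilde\eta_{2j-1}$ yields $[\tilde\eta_{2j}+\omega']=0$ in $H_*(C^-)$, whence $[\tilde\eta_{2j}]_v = [\omega']_v$ in $H_*(C^v) = \HF(Y) \cong \F$. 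Since the generator of $H_*(C^v)$ sits at filtration $\tau(K)=0$ while $\omega' \in \FF_{-g(K)}=\FF_{-1}$, we get $[\omega']_v=0$. On the other hand, writing $\tilde\eta_{2j} \equiv b_{2j,0}\tilde\xi_0 + \sum_{p \ge 1} b_{2j,2p}\tilde\xi_{2p} \pmod{UC^-}$ and noting that $[\tilde\xi_{2p}]=0$ in $H_*(C^v)$ for $p \ge 1$ (as $\tilde\xi_{2p}=\partial^v\tilde\xi_{2p-1}$) gives $[\tilde\eta_{2j}]_v = b_{2j,0}[\tilde\xi_0]$, forcing $b_{2j,0}=0$.

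The main obstacle I anticipate is the bookkeeping required when dividing by $U$ and reducing modulo $UC^-$: I must separate contributions by their $U$-power and verify that only the terms at the correct filtration survive. Special attention is needed for the $p=0$ term in the second sum (the contribution from $\gamma_0 = U^{-1}\partial\tilde\xi_0$), since $A(\tilde\xi_0)=\tau(K)$ may or may not equal $-g(K)$ depending on $\tau(K)$.
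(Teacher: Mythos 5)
Your proof is correct, and its second half takes a genuinely different route from the paper's. For the first assertion your argument is essentially the paper's in more computational clothing: the paper writes $\partial \tilde\eta_{2j-1} = U\tilde\eta_{2j} + U\delta$ with $A(\delta) \le -g(K)$ (using $\FF_{-g(K)-1} \subset UC^-$), cancels $U$ via $\partial^2 = 0$ to conclude $\partial\tilde\eta_{2j} \in UC^-$, and then uses that the cycles of $C^v$ are spanned by the even-indexed $\hat\xi_q$; your basis expansion reproves the same statement term by term. (Two small imprecisions there: Proposition \ref{prop:bases}\eqref{item:changeofbasis} only pins down the lowest $U$-power of each coefficient, not that it is a monomial, and you should record that $\tilde\eta_{2j}+\omega'$ is a cycle before passing to homology --- both are harmless, since only the reduction modulo $U$ enters your computation.) For the $\tilde\xi_0$-coefficient, the paper argues via the grading constraint together with $\epsilon(K)=1$ and Lemma \ref{lemma:g1tau0}, whereas you argue homologically: $U(\tilde\eta_{2j}+\omega') = \partial\tilde\eta_{2j-1}$ and torsion-freeness of $H_*(C^-) \cong \HFm(Y) \cong \F[U]$ give $[\hat\eta_{2j}] = [\hat\omega']$ in $H_*(C^v) \cong \F$, and $\hat\omega'$ is supported in filtration $-g(K) < \tau(K)$, hence is a boundary, forcing the $\hat\xi_0$-coefficient to vanish. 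This bypasses Lemma \ref{lemma:g1tau0} entirely (the facts $\epsilon(K)=0$ and $\tilde\xi_0 = \tilde\eta_0$ that you quote are never used), and in fact proves more than the stated lemma: the coefficient of $\tilde\xi_0$ vanishes whenever $\tau(K) \ne -g(K)$, in particular also in the exceptional case $\tau(K) = -g(K)+1$, $g(K)>1$. This is consistent with the paper, whose ``unless'' clause is only a sufficient condition for vanishing and whose later uses of the lemma (in Propositions \ref{prop:BK-D} and \ref{prop:VK-D}) tolerate a $\xi_0$-term in that case anyway. What your approach buys is independence from Hom's $\epsilon$ invariant and from Lemma \ref{lemma:g1tau0}; what it costs is a direct appeal to the global L-space input $\HFm(Y)\cong\F[U]$, which the paper's purely grading-theoretic argument defers to Lemma \ref{lemma:g1tau0}.
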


\begin{proof}
For $p = 0, \dots, 2n$, let $\hat \xi_p$ and $\hat \eta_p$ respectively denote the images of $\tilde \xi_p$ and $\tilde \eta_p$ in $C^v = C^-/UC^-$. The elements $\hat\xi_0, \hat\xi_2, \dots, \hat\xi_{2n}$ generate the cycles for $\partial^v$, so we must show that $\partial^v \hat\eta_{2j} = 0$, i.e., that $\partial \tilde\eta_{2j} \in UC^-$. When $\ell_j=1$, the definition of a horizontally simplified basis says that $A(\tilde\eta_{2j}) = A(\tilde\eta_{2j-1})+1$ and that $\partial  \tilde\eta_{2j-1} = U \tilde\eta_{2j} + \epsilon$, where $A(\epsilon) < -g(K)$. The fact that $\hat\eta_{2j-1}$ is in the minimal Alexander grading on $C^-/ UC^-$ implies that $\epsilon = U \delta$ for some $\delta$ with $A(\delta) \le -g(K)$. We have:
\[
0 = \partial^2 \tilde\eta_{2j-1}  = U \partial \tilde\eta_{2j} + \partial \epsilon = U(\partial \tilde\eta_{2j} + \partial \delta),
\]
and since multiplication by $U$ is injective, $\partial \tilde\eta_{2j} = \partial \delta$. Now, since $C^-$ is reduced, we have $\partial \delta = U \alpha + \beta$, where $A(\beta) < A(\delta) \le A(\tilde\eta_{2j-1})$, and therefore $\beta = U\gamma$ as above. Thus $\partial \tilde\eta_{2j} = U(\alpha + \gamma)$, as required.

Furthermore, if $\hat\eta_{2j}$ has a $\hat\xi_0$ component, then the Alexander grading of $\hat\xi_0$ --- which by definition is $\tau(K)$ --- is equal to $-g(K)+1$, and $\epsilon(K) = 1$ since $\hat\xi_0$ has an incoming horizontal arrow. The fact that $g(K)>1$ then follows from Lemma \ref{lemma:g1tau0} below.
\end{proof}

\begin{lemma} \label{lemma:g1tau0}
If $Y$ is an L-space homology sphere, and $K \subset Y$ is a knot with $g(K)=1$ and $\tau(K) = 0$, then $\epsilon(K)=0$.
\end{lemma}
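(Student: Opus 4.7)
The plan is to argue by contradiction. Suppose $\epsilon(K) \ne 0$. Since $\epsilon(-K) = -\epsilon(K)$, $\tau(-K) = -\tau(K) = 0$, $g(-K) = g(K) = 1$, and $-Y$ is also an L-space homology sphere, I may replace $(Y, K)$ by $(-Y, -K)$ if necessary and assume $\epsilon(K) = 1$. Choose vertically and horizontally simplified bases $\{\tilde\xi_p\}$ and $\{\tilde\eta_p\}$ for $C^- = \CFKm(Y, K)$ as in Proposition~\ref{prop:bases}. Then $\tilde\xi_0 = \tilde\eta_2$, so $A(\tilde\eta_2) = \tau(K) = 0$; combined with $A(\tilde\eta_1) = A(\tilde\eta_2) - \ell_1 \ge -g(K) = -1$, this forces $\ell_1 = 1$ and $A(\tilde\eta_1) = -1$.

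The defining equation of a horizontally simplified basis yields
\[ \partial \tilde\eta_1 = U \tilde\eta_2 + \epsilon_1, \qquad A(\epsilon_1) \le -2. \]
Because $g(K) = 1$, every element of $C^-$ of Alexander grading $\le -2$ lies in $UC^-$, so $\epsilon_1 = U\delta$ for some $\delta \in C^-$ with $A(\delta) \le -1$, and hence $\partial \tilde\eta_1 = U(\tilde\xi_0 + \delta)$. Applying $\partial$ once more and using that multiplication by $U$ is injective on the free $\F[U]$-module $C^-$, we conclude $\partial(\tilde\xi_0 + \delta) = 0$; inverting $U$, the identity $\partial(U^{-1}\tilde\eta_1) = \tilde\xi_0 + \delta$ in $C^\infty$ shows that this cycle is a boundary in $C^\infty$.

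The L-space hypothesis now enters: since $Y$ is an L-space homology sphere, $H_*(C^-) = \HFm(Y) = \F[U]$ has no $U$-torsion, so the localization map $H_*(C^-) \to H_*(C^\infty) = \F[U, U^{-1}]$ is injective. Therefore $\tilde\xi_0 + \delta$ is already a boundary in $C^-$: there exists $z \in C^-$ with $\partial z = \tilde\xi_0 + \delta$. Reducing modulo $U$ gives
\[ \partial^v \hat z = \hat\xi_0 + \hat\delta \qquad \text{in } C^v = C^-/UC^-, \]
and this is where I plan to extract the contradiction. On the one hand, $A(\hat\delta) \le -1$ and $\{\hat\xi_p\}$ is a filtered basis of $C^v$, so $\hat\delta \in \Span\{\hat\xi_i : A(\hat\xi_i) \le -1\}$ and in particular has no $\hat\xi_0$-component (since $A(\hat\xi_0) = 0$); hence the right-hand side has coefficient $1$ on $\hat\xi_0$. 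On the other hand, in a vertically simplified basis $\partial^v \hat\xi_{2j-1} = \hat\xi_{2j}$ for $j = 1, \dots, n$ and $\partial^v \hat\xi_{2p} = 0$ for every $p$, so $\partial^v \hat z$ lies in $\Span\{\hat\xi_2, \hat\xi_4, \dots, \hat\xi_{2n}\}$ and has coefficient zero on $\hat\xi_0$. This contradiction finishes the proof. The whole argument hinges on the $U$-torsion-freeness of $\HFm(Y)$, which is precisely the leverage supplied by the L-space hypothesis; everything else is a short accounting of what the extremal Alexander grading $A(\tilde\eta_1) = -g(K)$ forces on $\partial \tilde\eta_1$.
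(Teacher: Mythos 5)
Your proof is correct. The skeleton matches the paper's: reduce by mirroring to $\epsilon(K)=1$, use Proposition~\ref{prop:bases} to get $\tilde\xi_0=\tilde\eta_2$, deduce $\ell_1=1$ and $\partial\tilde\eta_1=U(\tilde\xi_0+\delta)$ with $A(\delta)\le-1$, and derive a contradiction from $\hat\xi_0$ landing in the image of $\partial^v$ (which, in a vertically simplified basis, is spanned by $\hat\xi_2,\dots,\hat\xi_{2n}$). Where you genuinely diverge is the middle step that produces a chain whose boundary is $\tilde\xi_0+\delta$. The paper stays inside the simplified-basis bookkeeping: using part~\eqref{item:changeofbasis} of Proposition~\ref{prop:bases} and the grading bounds $A(\xi_0)=\tau(K)=0$, $A(\xi_{2j-1})>-1$, it shows $\eta_1$ lies in the $\F[U]$-span of $\{\xi_2,\dots,\xi_{2n}\}$, takes a vertical preimage $\xi$ with $\partial\xi=\eta_1+U\alpha$, and applies $\partial^2=0$ plus a correction $\alpha\mapsto\alpha'$ to exhibit $\hat\xi_0\in\im\partial^v$. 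You instead apply $\partial^2=0$ to $\tilde\eta_1$ directly, observe that $\tilde\xi_0+\delta$ is a cycle killed by $U$ in homology (equivalently, a boundary in $C^\infty$), and invoke the torsion-freeness of $H_*(C^-)\cong\HFm(Y)\cong\F[U]$ to find $z\in C^-$ with $\partial z=\tilde\xi_0+\delta$, finishing by the same mod-$U$ coefficient comparison. Your route makes the use of the L-space hypothesis explicit and homological, and it bypasses the change-of-basis compatibility \eqref{item:changeofbasis} entirely; the cost is that you import the standard identifications $H_*(\CFKm(Y,K))\cong\HFm(Y)$ and $\HFm(Y)\cong\F[U]$ for an L-space homology sphere, facts the paper never needs to state (its use of the L-space condition is hidden in the existence of vertically/horizontally simplified bases with a single unpaired generator). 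Both arguments are complete; yours is arguably more transparent about why the L-space condition matters, while the paper's is self-contained within the basis formalism already set up for Theorem~\ref{thm:cfkcfd}.
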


\begin{proof}
Suppose, toward a contradiction, that $\epsilon(K)=1$. We may find horizontally and reduced bases $\{\eta_0, \dots, \eta_{2n}\}$ and $\{\xi_0, \dots, \xi_{2n}\}$ satisfying the conclusions of Proposition \ref{prop:bases}; in particular, $\xi_0 = \eta_2$. Since $g(K) = 1$, the horizontal arrow from $\eta_1$ to $\eta_2$ has length $1$, which means that $A(\eta_1) = -1$ and $\partial \eta_1 = U \xi_0 + \gamma$, where $A(\gamma) < -1$. As above, we have $\gamma = U\delta$ for some $\delta$ with $A(\delta) \le -1$ since there are no chains with $U$ power zero having Alexander grading less than $-g=-1$. Now the filtration levels of $\xi_0$ and each of the $\xi_{2j-1}$ are strictly greater than $-1$, because the vertical differential decreases the Alexander grading and $A(\xi_0)=\tau(K)=0$.  It follows that $\eta_1$ is in the span of $\{\xi_2, \dots, \xi_{2n}\}$, so there exist elements $\xi, \alpha$ such that $\partial \xi = \eta_1 + U \alpha$. Hence,
\[
0 = \partial^2 \xi = \partial \eta_1 + U \partial \alpha = U(\xi_0 + \delta + \partial \alpha),
\]
so, by the injectivity of multiplication by $U$
\[
\partial \alpha = \xi_0 + \delta.
\]
If we write $\delta = a_0 \xi_0 + \dots + a_{2n} \xi_{2n}$, where $a_i \in \F[U]$, the fact that $A(\delta) \le -1$ implies that $a_0$ and $a_1, a_3, \dots, a_{2n-1}$ must be divisible by $U$. Setting
\[
\alpha' = \alpha + \sum_{j=1}^n a_{2j} \xi_{2j-1},
\]
we see that
\[
\partial \alpha' \equiv \xi_0 \pmod {UC^-},
\]
which means that $\xi_0$ is in the image of the vertical differential, a contradiction.

If $\epsilon(K) = -1$, we reduce to the previous case by considering the mirror $\overline{K}$ in place of $K$.
\end{proof}

We now return to the bordered invariants. For a nontrivial knot $K$, let
\[
B_K = \CFD(X_K, -g(K)).
\]
Note that
\[
B_K \cong \HFK(Y,K,-g(K)),
\]
and it is generated by some subset of $\{\xi_{2j} \mid j=1, \dots, n\}$, along with $\xi_0$ if $\tau(K) = -g(K)$; it is also generated by some subset of $\{\eta_{2j-1} \mid j = 1, \dots, n\}$, along with $\eta_0$ if $\tau(K) = g(K)$. Let $\pi_B = \pi_{-g}$ denote the projection onto $B_K$.

Note that $\CFD(X_K, -g(K) + \frac12)$ is generated by the elements $\kappa^j_1$, $\lambda^j_1$, and $\mu_1$ that are ``adjacent'' to the generators of $\CFD(X_K, -g(K))$ in the vertical, horizontal, and unstable chains. To be precise, let
\[
V_K= \text{subspace generated by }\{\kappa^j_1 \mid A(\tilde\xi_{2j}) = -g(K)\},\ \text{and}\  \mu_1\ \text{if}\ \tau(K) = -g(K),
\]
\[
H_K= \text{subspace generated by } \{\lambda^j_1 \mid A(\tilde\eta_{2j-1}) = -g(K)\}, \ \text{and}\ \mu_1\ \text{if}\ \tau(K) = g(K).
\]
Clearly, $\CFD(X_K, -g(K)+\frac12) = V_K \oplus H_K$. Furthermore, $V_K$ and $H_K$ each have the same rank as $B_K$; indeed, the restrictions of $D_{123}$ and $D_{3}$ to $B_K$ gives isomorphisms from $B_K$ to $V_K$ and $H_K$, respectively. Let $\pi_V\co \CFD(X_K) \to V_K$ and $\pi_H\co \CFD(X_K) \to H_K$ be the composition of $\pi_{-g+1/2}$ with projection onto the appropriate factors.

In $\CFD(X_K)'$, let $B_K'$, $V_K'$, and $H_K'$ be defined identically, except that $H_K$ also includes $\nu_1$ and $\nu_2$ in the case where $g(K)=1$ and $\tau(K)=0$.

The next two propositions describe all of the differentials into and out of $B_K$ and $V_K$, as well as their  counterparts in $\CFD(X_K)'$. ($H_K$ turns out not to be as useful for the present purposes.)

\begin{proposition} \label{prop:BK-D}
Let $K$ be a nontrivial knot with genus $g>0$ in an L-space homology sphere, and consider the subspace $B_K \subset \CFD(X_K)$ as described above.
\begin{enumerate}
\item \label{item:BKin}
Elements of $B_K$ have no incoming coefficient maps of any type. More precisely, for each $I \in \RR$, we have $\pi_{B} \circ D_I = 0$.

\item \label{item:BKout}
If $I_1, \dots, I_r$ are elements of $\RR$ such that the restriction of $D_{I_r} \circ \dots \circ D_{I_1}$ to $B_K$ is nontrivial, then:
\begin{enumerate}
    \item $I_1 = 3$ or $123$.
    \item If $I_1 = 123$ and $r>1$, then $I_2 = 23$.
    \item If $I_1 = 3$ and $r>1$, then $I_2 = 2$ or $23$; if $I_2 = 2$ and $r>2$, then $I_3 = 123$.
\end{enumerate}
\end{enumerate}
The same is true for $B_K' \subset \CFD(X_K)'$.
\end{proposition}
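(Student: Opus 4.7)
The plan is to work directly with the description of $\CFD(X_K)$ from Theorem~\ref{thm:cfkcfd}, using the Alexander grading degrees of the coefficient maps ($D_1$: $-\tfrac12$; $D_2, D_3, D_{123}$: $+\tfrac12$; $D_{12}$: $0$; $D_{23}$: $+1$) to constrain which maps can land in or emanate from the lowest grading $-g$. For Part~(1), since $B_K\subset\iota_0\CFD(X_K)$, only $D_2$ and $D_{12}$ (the coefficient maps with codomain in $\iota_0$) can potentially contribute. Inspecting the chains in Theorem~\ref{thm:cfkcfd}, every generator of $\iota_1\CFD(X_K)$ has Alexander grading at least $-g+\tfrac12$, so $D_2$ lands in grading at least $-g+1$; and the map $D_{12}$ appears only in the unstable chain when $\tau=0$, where its image $\eta_0$ has grading $0\ne -g$ (using $g>0$). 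For Part~(2)(a), an analogous computation shows $D_1|_{B_K}=0$ and $D_{12}|_{B_K}=0$: both are nontrivial on the $\xi$-basis only on $\xi_{2j-1}$ (whose grading strictly exceeds $-g$, so it is absent from $B_K$) or on $\xi_0$ (which lies in $B_K$ only if $\tau=-g<0$, excluding the $\tau>0$ condition for $D_1$ and the $\tau=0$ condition for $D_{12}$), leaving $I_1\in\{3,123\}$.

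For Part~(2)(b), I note that the image of $D_{123}|_{B_K}$ lies in $V_K$, spanned by $\{\kappa^j_1:A(\tilde\xi_{2j})=-g\}$ together with $\mu_1$ when $\tau=-g$. The outgoing coefficient maps from $\iota_1$ are $D_2$ and $D_{23}$; since $D_2$ acts nontrivially only on $\lambda^j_{\ell_j}$ and $\mu_t$, and $t=2|\tau|\ge 2$ rules out $\mu_1=\mu_t$, only $D_{23}$ can continue, forcing $I_2=23$. A symmetric computation for Part~(2)(c) shows that $D_3|_{B_K}$ maps into $H_K$, spanned by $\{\lambda^j_1:A(\tilde\eta_{2j-1})=-g\}$ together with $\mu_1$ when $\tau=g$. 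From $\lambda^j_1$ the outgoing map is $D_{23}$ if $\ell_j\ge 2$ and $D_2$ to $\eta_{2j}$ if $\ell_j=1$, while from $\mu_1$ only $D_{23}$ continues (again because $t\ge 2$); thus $I_2\in\{2,23\}$.

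For the final claim of (c), when $I_2=2$ the image of $D_2\circ D_3|_{B_K}$ lies in the span of $\{\eta_{2j}:j\ge 1,\ \ell_j=1,\ A(\tilde\eta_{2j-1})=-g\}$. Here I invoke Lemma~\ref{lemma:length1}: each such $\tilde\eta_{2j}$ is congruent modulo $UC^-$ to a linear combination of the even-indexed $\tilde\xi_{2i}$'s, with a $\tilde\xi_0$ term appearing only when $\tau=-g+1$ and $g>1$ (a case that forces $\tau<0$). Correspondingly, $\eta_{2j}$ in $\CFD(X_K)$ is a linear combination of even-indexed $\xi$'s and, in this restricted case, of $\xi_0$. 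I then check $D_{I_3}(\eta_{2j})$ for each $I_3\in\RR$: $D_2$ and $D_{23}$ vanish by idempotent mismatch; $D_3(\eta_{2j})=0$ since $\eta_{2j}$ (for $j\ge 1$) is not a source of $D_3$; $D_1$ vanishes on the $\xi_{2i}$ components (for $i\ge 1$) and on $\xi_0$ when present (since $\tau<0$ excludes the $\tau>0$ condition); $D_{12}$ similarly vanishes (since $\tau\ne 0$). Only $D_{123}$ can contribute, forcing $I_3=123$. The main obstacle is exactly this final step: the essential content of Lemma~\ref{lemma:length1} is not just that $\tilde\eta_{2j}$ has no odd-indexed $\tilde\xi$ components modulo $U$, but that any $\tilde\xi_0$ component forces $\tau<0$, which is precisely what excludes spurious contributions from $D_1$ and $D_{12}$.
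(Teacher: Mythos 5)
Your proof is correct and follows essentially the same route as the paper's: rule out $D_1,D_{12}$ (and idempotent-forbidden maps) on $B_K$, trace the vertical and unstable chains for $I_1=123$, and for $I_1=3$ reduce to length-one horizontal arrows and invoke Lemma~\ref{lemma:length1}, with the $\tilde\xi_0$-component caveat ($\tau=-g+1$, $g>1$) excluding spurious $D_1$ and $D_{12}$ contributions. Your grading-based argument for part~(1) and your explicit case-check of $D_{I_3}$ on $\eta_{2j}$ are just slightly more spelled-out versions of the paper's observations, not a different method.
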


\begin{proof}
The proofs for $B_K$ and $B_K'$ are identical, so we consider only $B_K$. The first statement follows immediately from Theorem \ref{thm:cfkcfd} and the fact that $B_K$ does not contain elements of the form  $\eta_{2j}$ for $j = 1, \dots, n$, and does not contain $\eta_0$ if $\tau(K)\le 0$ (the only cases where $\eta_0$ has an incoming coefficient map).

For the second statement, note that $D_1$ and $D_{12}$ restricted to $B_K$ are both zero, so we may reduce to the two cases where $I_1 = 3$ or $123$, which we treat separately.

In the case where $I_1 = 123$, we consider the vertical basis for $B_K$. If $\xi_{2j} \in B_K$, where $j \in \{1, \dots, n\}$, then the only nonzero sequence of coefficient maps coming from $\xi_{2j}$ and starting with $D_{123}$ is the vertical chain
\[
\xi_{2j} \xrightarrow{D_{123}} \kappa^j_1 \xrightarrow{D_{23}} \cdots \xrightarrow{D_{23}} \kappa^j_{k_j}.
\]
If $\xi_0 \in \CFD(X_K, -g)$, then $\tau(K) = -g(K) < 0$, so the unstable chain provides the sequence
\[
\xi_0 \xrightarrow{D_{123}} \mu_1 \xrightarrow{D_{23}} \dots \xrightarrow{D_{23}} \mu_{2g} \xrightarrow{D_2} \eta_0,
\]
with at least one $D_{23}$. Thus, the only $I$ such that $D_I \circ D_{123} |_{B_K}$ can be nonzero is $I=23$.

In the case where $I_1 = 3$, we use the horizontal basis. If $\eta_0 \in B_K$, then $\tau(K) = g(K)>0$, so the unstable chain provides the sequence
\[
\eta_0 \xrightarrow{D_{3}} \mu_1 \xrightarrow{D_{23}} \cdots \xrightarrow{D_{23}} \mu_{2g}.
\]
If $\eta_{2j-1} \in \CFD(X_K,-g)$, the horizontal chain from $\eta_{2j-1}$ to $\eta_{2j}$ provides the sequence
\[
\eta_{2j-1} \xrightarrow{D_3} \lambda^j_1 \xrightarrow{D_{23}} \dots \xrightarrow{D_{23}} \lambda^j_{\ell_j} \xrightarrow{D_2} \eta_{2j}.
\]
Thus, it remains to consider the case where $\ell_j=1$. Lemma \ref{lemma:length1} says that $\eta_{2j}$ is a linear combination of $\xi_2, \xi_4, \dots, \xi_{2n}$, along with $\xi_0$ provided that $\tau(K) = -g(K)+1$ and $g(K)>1$, and
only $D_{123}$ is nonzero on these elements (via corresponding vertical or unstable chains). Hence, the only $I$ such that $D_I \circ D_2 \circ D_3 |_{B_K}$ can be nonzero is $I=123$, as required.
\end{proof}

\begin{proposition} \label{prop:VK-D}
Let $K$ be a nontrivial knot with genus $g>0$ in an L-space homology sphere, and consider the subspace $V_K \subset \CFD(X_K)$ as described above.
\begin{enumerate}
\item \label{item:VKin}
The only possible nonzero sequences of coefficient maps into $V_K$ are $D_{123}$ and $D_1$. More precisely, if $\pi_V \circ D_{I_r} \circ \dots \circ D_{I_1} \ne 0$, then $r = 1$ and $I_1 = 123$ or $1$.

\item \label{item:VKout}
If the restriction of $D_{I_r} \circ \dots \circ D_{I_1}$ to $V_K$ is nontrivial, then $I_1 = 23$.
\end{enumerate}

The same is true for $V_K' \subset \CFD(X_K)'$.
\end{proposition}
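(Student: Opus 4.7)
I would prove both parts by combining the explicit description of the coefficient maps in Theorem~\ref{thm:cfkcfd} with the structural results established for $B_K$ (Proposition~\ref{prop:BK-D}) and the change-of-basis constraints from Lemmas~\ref{lemma:length1} and~\ref{lemma:g1tau0}.

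For part~\eqref{item:VKin}, I first enumerate the direct incoming coefficient maps to $V_K$. Inspecting the vertical, horizontal, and unstable chains shows that the only maps whose image meets $V_K$ are $D_{123}(\xi_{2j}) = \kappa^j_1$ (with $\xi_{2j} \in B_K$), $D_{123}(\xi_0) = \mu_1$ in the case $\tau(K) = -g$ (with $\xi_0 \in B_K$), and $D_1(\xi_{2j-1}) = \kappa^j_1$ when $k_j = 1$. Hence $D_{I_r}$ must be $D_{123}$ or $D_1$. In the $D_{123}$ case the source lies in $B_K$, and Proposition~\ref{prop:BK-D}\eqref{item:BKin} forbids any prior composition from landing in $B_K$, forcing $r = 1$.

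The $D_1$ case requires more care. The source $\xi_{2j-1}$ lies in $V^0$ with Alexander grading $-g+1$, so for $r \ge 2$ the preceding map $D_{I_{r-1}}$ must output an element of $V^0$ with a nonzero $\xi_{2j-1}$ component, forcing $I_{r-1} \in \{2, 12\}$. If $I_{r-1} = 2$, matching gradings shows the only relevant output of $D_2$ is an image $\eta_{2i}$ with $A(\tilde\eta_{2i-1}) = -g$ and $\ell_i = 1$; the alternative $D_2(\mu_t) = \eta_0$ would require $\tau(K) = g-1 < 0$, hence $g < 1$, which is excluded. Lemma~\ref{lemma:length1} then shows $\tilde\eta_{2i} \pmod{UC^-}$ is supported on even-indexed $\tilde\xi_q$, so the odd-indexed coefficient $b_{2i, 2j-1}$ vanishes and $\pi_V \circ D_1 \circ D_2 = 0$. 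If $I_{r-1} = 12$, the only nontrivial $D_{12}$ is $D_{12}(\xi_0) = \eta_0$ (requiring $\tau(K) = 0$); matching gradings forces $g = 1$, whence Lemma~\ref{lemma:g1tau0} yields $\epsilon(K) = 0$ and therefore $\xi_0 = \eta_0$, so $\eta_0$ contains no $\xi_{2j-1}$ component and $\pi_V \circ D_1 \circ D_{12} = 0$. Together these cases yield $r = 1$ with $I_1 \in \{123, 1\}$.

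Part~\eqref{item:VKout} is a direct inspection. The generators of $V_K$ lie in $\iota_1 \CFD(X_K)$, so only $D_2$ or $D_{23}$ can act nontrivially on them; but $D_2$ vanishes on $V_K$ because its only nontrivial inputs are $\lambda^i_{\ell_i}$ and (in the case $\tau(K) < 0$) $\mu_t$, and $\mu_t \in V_K$ would force $t = 1$, contradicting $t = 2g \ge 2$. Hence any nontrivial outgoing composition must begin with $D_{23}$. The main obstacle is the $D_1 \circ D_2$ sub-case of part~\eqref{item:VKin}, where the genus-detecting content of Lemma~\ref{lemma:length1} is essential for ruling out an odd-indexed $\tilde\xi$ component in the expansion of $\tilde\eta_{2i}$, with Lemma~\ref{lemma:g1tau0} handling the borderline case $g = 1$, $\tau(K) = 0$.
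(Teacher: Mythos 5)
Your proof is correct and follows essentially the same route as the paper: the same enumeration of incoming maps from Theorem~\ref{thm:cfkcfd}, homogeneity of the change of basis, and Lemmas~\ref{lemma:length1} and~\ref{lemma:g1tau0} to kill the potential $D_1\circ D_2$ and $D_1\circ D_{12}$ contributions. The only cosmetic differences are that you split the $\eta_0$ possibilities by the preceding coefficient map (using the sign of $\tau$ for the unstable $D_2$ and $\xi_0=\eta_0$ for the $D_{12}$ case) where the paper argues $\eta_0$ has no incoming map at all, and you verify part~\eqref{item:VKout} by checking $D_2|_{V_K}=0$ directly instead of quoting Proposition~\ref{prop:BK-D} via the isomorphism $D_{123}|_{B_K}\co B_K\to V_K$; both variants are valid.
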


\begin{proof}
By Theorem \ref{thm:cfkcfd}, the only coefficient maps whose image have nonzero projection to $V_K$ are $D_1$ and $D_{123}$. Furthermore, the only nonzero contribution to $\pi_V \circ D_1$ comes when $A(\tilde \xi_{2j}) = -g(K)$ and $k_j = 1$, in which case $D_1(\xi_{2j-1}) = \kappa^j_1$. It remains to verify that $\xi_{2j-1}$ has no incoming coefficient maps coming from the horizontal or unstable chains. If $\eta_{2i}$ has a nonzero $\xi_{2j-1}$ component, then $A(\eta_{2i}) = A(\xi_{2j-1})= -g(K)+1$ and $A(\eta_{2i-1}) = -g(K)$, so by Lemma \ref{lemma:length1}, $\eta_{2i}$ is in the span of $\xi_0, \xi_2, \dots, \xi_{2n}$, a contradiction. Likewise, if $\eta_0$ has a nonzero $\xi_{2j-1}$ component, then $\tau(K) = -A(\eta_0) = g(K)-1$ and $\epsilon(K) = -1$, so $g(K)>1$ by Lemma \ref{lemma:g1tau0}, hence $\tau(K)>0$. The unstable chain then gives $\eta_0$ an outgoing differential ($\eta_0 \xrightarrow{D_3} \mu_1$), not an incoming one. This concludes the proof of the first statement.

The second statement follows Proposition \ref{prop:BK-D} and the fact that $D_{123}|_{B_K} \co B_K \to V_K$ is an isomorphism.
\end{proof}

Next, we use the algorithm of Theorem \ref{thm:identity} to give analogous results for $\CFA(X_K)$. We view $\CFA(X_K)$ as having the same underlying vector space as $\CFD(X_K)$, with $\AA_\infty$ multiplications given by Theorem \ref{thm:identity}. We may then think of $B_K$, $V_K$, and $H_K$ as subspaces of $\CFA(X_K)$.

\begin{proposition} \label{prop:BK-A}
Let $K$ be a nontrivial knot with genus $g>0$ in an L-space homology sphere, and consider the subspace $B_K \subset \CFA(X_K)$ as described above.
\begin{enumerate}
\item \label{item:BKin-A}
Elements of $B_K$ have no incoming multiplications of any type. More precisely, for any $a_1, \dots, a_k \in \AA$, the composition $\pi_B \circ m_{k+1}(\cdot \otimes a_1 \otimes \dots \otimes a_k)$ is trivial.

\item \label{item:BKout-A}
If $I_1, \dots, I_r$ are elements of $\RR$ such that the restriction of $m_{r+1}(\cdot \otimes \rho_{I_1} \otimes \dots \otimes \rho_{I_r})$ to $B_K$ is nonzero, then:
\begin{enumerate}
    \item If $I_1 = 123$, then $r \ge 2$ and $I_2 = 2$.
    \item If $I_1 = 3$, then $r \ge 3$, $I_2 = 2$, and $I_3 = 1$ or $12$.
\end{enumerate}
\end{enumerate}
\end{proposition}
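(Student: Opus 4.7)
The strategy is to translate each claim about type-$A$ multiplications into a claim about compositions of type-$D$ coefficient maps via Theorem \ref{thm:identity}, and then apply Proposition \ref{prop:BK-D} (together with Theorem \ref{thm:cfkcfd}, Proposition \ref{prop:VK-D}, and Lemma \ref{lemma:length1}) to constrain which such compositions can be nonzero on $B_K$. For part \eqref{item:BKin-A}, Theorem \ref{thm:identity} writes every multiplication $m_{k+1}(v \otimes \rho_{I_1}\otimes\cdots\otimes\rho_{I_k})$ of arity $\geq 2$ as a composition $D_{\phi(J_s)}\circ\cdots\circ D_{\phi(J_1)}(v)$ of coefficient maps on $\CFD(X_K)$; Proposition \ref{prop:BK-D}\eqref{item:BKin} says no such composition ever lands in $B_K$, and $m_1 = 0$ by construction, giving $\pi_{-g}\circ m_{k+1} = 0$.

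For part \eqref{item:BKout-A}, suppose the restriction to $B_K$ of $m_{r+1}(\cdot\otimes\rho_{I_1}\otimes\cdots\otimes\rho_{I_r})$ is nonzero, and write $\Psi(I_1\cdots I_r) = (J_1,\dots,J_s)$. In the case $I_1 = 3$, the parse starts with a $\SS$-prefix $J_1 \in \{3, 32, 321\}$ of the concatenation, so the initial coefficient map $D_{\phi(J_1)}$ lies in $\{D_1, D_{12}, D_{123}\}$. Since $D_1$ lowers Alexander grading and $D_{12}$ in $\CFD(X_K)$ only arises from the $\tau(K) = 0$ unstable chain (on $\xi_0$, which lies in $B_K$ only when $\tau(K) = -g < 0$), both $D_1|_{B_K}$ and $D_{12}|_{B_K}$ vanish. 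Hence $J_1 = 321$, forcing the concatenation to begin with $321$; thus $I_1 = 3$, $I_2 = 2$, and $I_3$ starts with $1$, so $I_3\in\{1, 12, 123\}$ and $r\geq 3$. To exclude $I_3 = 123$: parsing the prefix $321\cdot 123 = 321123$ gives $\Psi = (321, 1, 2, 3)$, so the composition begins with $D_3\circ D_{123}$; but $D_{123}(B_K)\subset V_K$ and Proposition \ref{prop:VK-D}\eqref{item:VKout} tells us the only nonzero map out of $V_K$ is $D_{23}$, so this composition vanishes, leaving $I_3\in\{1, 12\}$.

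In the case $I_1 = 123$, the parse necessarily begins $(1, 2, 3, \dots)$ since the elements of $\SS$ starting at $1$ are only $\{1\}$, so the initial coefficient map is $D_3$. If $r = 1$, the full composition is $D_1\circ D_2\circ D_3$; tracing a generator $v\in B_K$, $D_3(v)$ lands in a horizontal chain or in $\mu_1$, and $D_2$ is nonzero on the result only when $\ell_j = 1$, producing $\eta_{2j}$. By Lemma \ref{lemma:length1}, $\eta_{2j}$ lies in the span of $\{\xi_0, \xi_2,\dots,\xi_{2n}\}$, and by Theorem \ref{thm:cfkcfd} $D_1$ acts nontrivially only on odd-indexed $\xi$'s, so $D_1(\eta_{2j}) = 0$ and the composition vanishes, yielding $r\geq 2$. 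Now $I_2$ starts with $2$ by parity alternation, so $I_2\in\{2, 23\}$; if $I_2 = 23$, parsing $12323$ gives $(1, 2, 32, \dots)$, and the composition begins with $D_{12}\circ D_2\circ D_3$. Applying the same reasoning, $D_2\circ D_3(v) = \eta_{2j}$ lies in the span of the even-indexed $\xi$'s, on which $D_{12}$ from Theorem \ref{thm:cfkcfd} acts nontrivially only on $\xi_0$ in the $\tau(K) = 0$ unstable chain; the $\xi_0$-coefficient of $\eta_{2j}$ vanishes in this case by Lemma \ref{lemma:length1} (the exceptional case $\tau(K) = -g+1 = 0$ with $g = 1$ is excluded via Lemma \ref{lemma:g1tau0}), so $D_{12}(\eta_{2j}) = 0$ and the composition dies. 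Hence $I_2 = 2$.

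The main obstacle is the exclusion step in each case, where I must carefully track where the intermediate image of the composition lands and show that the very next coefficient map (namely $D_3$ out of $V_K$ in case (a) and $D_{12}$ applied to $\eta_{2j}$ in case (b)) vanishes. The latter, in particular, rests on invoking Lemma \ref{lemma:length1} to express $\eta_{2j}$ in the even-$\xi$ span and then eliminating the exceptional $\xi_0$-contribution using Lemma \ref{lemma:g1tau0}; everything else is a direct combinatorial translation via $\Psi$ and $\phi$ from the known $\CFD$ constraints.
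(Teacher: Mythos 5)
Your overall strategy is the paper's: convert each $m_{r+1}(\cdot\otimes\rho_{I_1}\otimes\cdots\otimes\rho_{I_r})$ into the composition $D_{\phi(J_j)}\circ\cdots\circ D_{\phi(J_1)}$ with $(J_1,\dots,J_j)=\Psi(I_1\cdots I_r)$ and then impose the $\CFD$-side restrictions; where the paper cites Proposition \ref{prop:BK-D}(2) wholesale, you partially re-derive those restrictions from Theorem \ref{thm:cfkcfd}, Lemma \ref{lemma:length1}, Lemma \ref{lemma:g1tau0}, and Proposition \ref{prop:VK-D}, which is legitimate.

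However, one step is miswritten. To exclude $I_3=123$ in the case $I_1=3$ you form the string $321\cdot 123=321123$: this double-counts the leading $1$ of $I_3$ (the actual concatenation $I_1I_2I_3=3\cdot2\cdot123$ is $32123$), the string $321123$ is not even alternating, and your claimed parse $(321,1,2,3)$ is not a legal $\Psi$-decomposition, since $\last(321)=1\not<\first(1)$. With the correct concatenation $32123\cdots$, the parse begins $(321,2,\dots)$, so the composition starts with $D_2\circ D_{123}$ rather than $D_3\circ D_{123}$. Your vanishing argument survives the correction verbatim: $D_{123}(B_K)\subset V_K$ and Proposition \ref{prop:VK-D}(2) rules out $D_2$ (just as it rules out $D_3$) as the first map out of $V_K$; alternatively one can cite Proposition \ref{prop:BK-D}(2)(b) directly. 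A smaller point: in ruling out $r=1$ when $I_1=123$, you assert that $D_1$ is nonzero only on odd-indexed $\xi$'s, but Theorem \ref{thm:cfkcfd} also gives $D_1(\xi_0)\ne0$ when $\tau(K)>0$. No harm results, since by Lemma \ref{lemma:length1} a nonzero $\xi_0$-component of $\eta_{2j}$ forces $\tau(K)=-g+1<0$, but this case should be addressed (or simply quote Proposition \ref{prop:BK-D}(2)(c), which already forces the third coefficient map to be $D_{123}$).
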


\begin{proof}
This proposition follows by applying Theorem \ref{thm:identity} to the results of Proposition \ref{prop:BK-D}. For any $I_1, \dots, I_r \in \RR$ with $I_1 \cdots I_r$ alternating and $\last(I_i) > \first(I_{i+1})$ for all $i$, we have
\[
m_{r+1}(\cdot \otimes \rho_{I_1} \otimes \dots \otimes \rho_{I_r}) = D_{\phi(J_j)} \circ \cdots \circ D_{\phi(J_1)},
\]
where $(J_1, \dots, J_j) = \Psi(I_1 \cdots I_r)$. If the restriction of $m_{r+1}(\cdot \otimes \rho_{I_1} \otimes \dots \otimes \rho_{I_r})$ to $B_K$ or $V_K$ is nonzero, the sequence $(\phi(J_1), \dots, \phi(J_j))$ must satisfy the conclusions of Proposition \ref{prop:BK-D}. Specifically:

\begin{itemize}
\item If $I_1 = 123$, then $\phi(J_1) = 3$ and $\phi(J_2) = 2$, so Proposition \ref{prop:BK-D} says that  $j>2$ and $J_3 = 123$. Hence $I_1 \cdots I_r= 12321 \cdots$, so $I_2 = 2$.

\item If $I_1 = 3$, then $\phi(J_1)$ begins with $1$, so Proposition \ref{prop:BK-D} says that $\phi(J_1)=123$ and $\phi(J_2) =23$ if $j>1$. Hence $I_1 \cdots I_r = 321$ or $32121\dots$, so $I_2 = 2$ and $I_3 = 1$ or $12$.
\end{itemize}
The proof is complete.
\end{proof}

A similar argument shows the following proposition.

\begin{proposition} \label{prop:VK-A}
Let $K$ be a nontrivial knot with genus $g>0$ in an L-space homology sphere, and consider the subspace $V_K \subset \CFA(X_K)$ as described above.
\begin{enumerate}
\item \label{item:VKin-A}
The only possible nonzero $\AA_\infty$ multiplications into $V_K$ are $m_2(\cdot \otimes \rho_3)$ and $m_4(\cdot \otimes \rho_3 \otimes \rho_2 \otimes \rho_1)$. More precisely, if $\pi_V \circ m_{r+1}(\cdot \otimes \rho_{I_1} \otimes \dots \otimes \rho_{I_r}) \ne 0$, then either $r = 1$ and $I_1 = 3$, or $r=3$ and $(I_1, I_2, I_3) = (3,2,1)$.

\item \label{item:VKout-A}
If the restriction of $m_{r+1}(\cdot \otimes \rho_{I_1} \otimes \dots \otimes \rho_{I_r})$ to $B_K$ is nonzero, then $I_1 = 2$. \qed
\end{enumerate}
\end{proposition}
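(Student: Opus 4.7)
The plan is to apply Theorem \ref{thm:identity} to translate the structural results of Proposition \ref{prop:VK-D} (which are about $\CFD(X_K)$) into statements about $\CFA(X_K)$, in exactly the same style as the proof of Proposition \ref{prop:BK-A}. Recall that Theorem \ref{thm:identity} identifies the nontrivial multiplications on $\CFA(X_K)$ with certain compositions of coefficient maps on $\CFD(X_K)$: for any sequence $I_1, \dots, I_r \in \RR$ with $I_1 \cdots I_r$ alternating and $\last(I_i) > \first(I_{i+1})$, setting $\Psi(I_1 \cdots I_r) = (J_1, \dots, J_j)$, one has $m_{r+1}(\cdot \otimes \rho_{I_1} \otimes \cdots \otimes \rho_{I_r}) = D_{\phi(J_j)} \circ \cdots \circ D_{\phi(J_1)}$.

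For part \eqref{item:VKin-A}, suppose $\pi_V \circ m_{r+1}(\cdot \otimes \rho_{I_1} \otimes \cdots \otimes \rho_{I_r}) \ne 0$. Then the composition $\pi_V \circ D_{\phi(J_j)} \circ \cdots \circ D_{\phi(J_1)}$ is nonzero, so by Proposition \ref{prop:VK-D}\eqref{item:VKin} we must have $j=1$ and $\phi(J_1) \in \{1, 123\}$. If $\phi(J_1) = 1$, then $J_1 = 3$, forcing $I_1 \cdots I_r = 3$, so $r=1$ and $I_1 = 3$, recovering $m_2(\cdot \otimes \rho_3)$. If $\phi(J_1) = 123$, then $J_1 = 321$, so we must decompose $321$ as a concatenation of elements of $\RR$ satisfying $\last(I_i) > \first(I_{i+1})$; since each consecutive substring of length $\ge 2$ of $321$ is decreasing and hence not in $\RR$, the only possibility is $(I_1,I_2,I_3) = (3,2,1)$, recovering $m_4(\cdot \otimes \rho_3 \otimes \rho_2 \otimes \rho_1)$.

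For part \eqref{item:VKout-A} (where we read the restriction as to $V_K$, matching the title of the proposition and Proposition \ref{prop:VK-D}\eqref{item:VKout}), suppose $m_{r+1}(\cdot \otimes \rho_{I_1} \otimes \cdots \otimes \rho_{I_r})|_{V_K} \ne 0$. Then the innermost map applied, $D_{\phi(J_1)}$, must be nonzero on $V_K$, so Proposition \ref{prop:VK-D}\eqref{item:VKout} gives $\phi(J_1) = 23$, i.e.\ $J_1 = 21$. Hence the concatenation $I_1 \cdots I_r$ begins with the digits $2,1$, so $I_1 \in \RR$ begins with $2$. The only elements of $\RR$ starting with $2$ are $2$ and $23$, and the latter would force the second digit of the concatenation to be $3$ rather than $1$; therefore $I_1 = 2$.

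No substantial obstacle is expected: the argument is a direct combinatorial translation via $\phi$ and $\Psi$. The only item requiring care is the uniqueness of the decomposition of $321$ as a valid concatenation in part \eqref{item:VKin-A}, which follows immediately from the observation that every substring of $321$ of length at least two is strictly decreasing and therefore not an element of $\RR$.
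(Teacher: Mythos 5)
Your proof is correct and is exactly the argument the paper intends: Proposition \ref{prop:VK-A} is stated with only ``a similar argument shows,'' meaning one applies Theorem \ref{thm:identity} to translate Proposition \ref{prop:VK-D} just as Proposition \ref{prop:BK-A} was obtained from Proposition \ref{prop:BK-D}, which is precisely what you do. Your reading of ``$B_K$'' in part \eqref{item:VKout-A} as a typo for ``$V_K$'' is also right, as that is how the statement is used in the proof of Theorem \ref{thm:main}.
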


\begin{proof}[Proof of Theorem \ref{thm:main}]
Let $K_1 \subset Y_1$ and $K_2 \subset Y_2$ be nontrivial knots in L-space homology spheres. The Alexander gradings on $\CFA(K_1)$ and $\CFD(K_2)'$ give a direct sum decomposition
of $\CFA(X_{K_1}) \boxtimes \CFD(X_{K_2})'$ as a vector space,
\[
\CFA(X_{K_1}) \boxtimes \CFD(X_{K_2})' = \bigoplus_{s \in \Z} C_s,
\]
where
\[
C_s = \bigoplus_{t \in \frac12 \Z} \CFA(X_{K_1}, t) \otimes_{\mathcal{I}} \CFD(X_{K_2}, s-t)'.
\]
Note that
\[
C_{-g(K_1)-g(K_2)} = B_{K_1} \otimes B_{K_2}'
\]
and
\begin{multline*}
C_{-g(K_1)-g(K_2)+1} = (V_{K_1} \otimes V_{K_2}') \oplus (V_{K_1} \otimes H_{K_2}') \oplus (H_{K_1} \otimes V_{K_2}') \oplus (H_{K_1} \otimes H_{K_2}') \\
\oplus (B_{K_1} \otimes \CFD(X_{K_2}, -g(K_2)+1)') \oplus (\CFA(X_{K_1}, -g(K_1)+1) \otimes B_{K_2}').
\end{multline*}
We claim that the direct summands $B = B_{K_1} \otimes B_{K_2}'$ and $V = V_{K_1} \otimes V_{K_2}'$, each of dimension $\dim \HFK(Y_1, K_1, -g(K_1)) \cdot \dim \HFK(H_2, K_2, -g(K_2))$, both survive in the homology of $\CFA(X_{K_1}) \boxtimes \CFD(X_{K_2})'$, which will prove that
\[
\dim HF(Y(K_1, K_2)) \ge 2 \dim \HFK(Y_1, K_1, -g(K_1)) \cdot \dim \HFK(H_2, K_2, -g(K_2)) \ge 2,
\]
as required.

To see that the differential on $\CFA(X_{K_1}) \boxtimes \CFD(X_{K_2})'$ is identically zero on $B$, we simply note that there do not exist $I_1, \dots, I_r \in \RR$ satisfying the conclusions of the second parts of Propositions \ref{prop:BK-D} and \ref{prop:BK-A} simultaneously. Thus, for any $x \in B_{K_1}$ and $y \in B_{K_2}'$,
\[
\partial^\boxtimes(x \otimes y) = \sum_{I_1, \dots, I_r \in \RR} m_{r+1}(x \otimes \rho_{I_1} \otimes \dots \otimes \rho_{I_r}) \otimes (D_{I_r} \circ \dots \circ D_{I_1})(y) = 0.
\]
(Here $m_{r+1}$ denotes an $\AA_\infty$ multiplication on $\CFA(X_{K_1})$, while $D_{I_1}, \dots, D_{I_r}$ denote coefficient maps on $\CFD(X_{K_2})'$.) Furthermore, the first parts of Propositions \ref{prop:BK-D} and \ref{prop:BK-A} imply that the composition of $\partial^\boxtimes$ with the projection onto $B$ coming from the direct sum decomposition is zero. Thus, $B$ survives in homology.

The proof for $V$ is similar, using Propositions \ref{prop:VK-D} and \ref{prop:VK-A}. Just as above, the restriction of $\partial^\boxtimes$ to $V$ vanishes. Furthermore, if $x \in \CFA(X_{K_1})$ and $y \in \CFD(X_{K_2})'$ are such that $\partial^\boxtimes(x \otimes y)$ has nontrivial projection to $V$, there must be $I_1, \dots, I_r$ that simultaneously satisfy the first parts of Propositions \ref{prop:VK-D} and \ref{prop:VK-A}, but clearly this is impossible.
\end{proof}

\section{Examples}\label{sec:examples}
Let $L$ and $R$ denote the left- and right-handed trefoils in $S^3$, respectively. For each of these knots, $\CFKm$ has a basis that is simultaneously horizontally and vertically simplified (up to permuting elements). The invariants $\CFD(X_{L})$ and $\CFD(X_{R})$ are as follows:

\[
\xy
(0,0)*{
    \xy
     (0,30)*{\xi_1}="x1";
     (0,0)*{\xi_2}="x2";
     (30,0)*{\xi_0}="x0";
     (0,15)*{\kappa}="k";
     (15,0)*{\lambda}="l";
     (25,15)*{\mu_1}="g1";
     (15,25)*{\mu_2}="g2";
     {\ar^{D_3} "x0";"l"};
     {\ar^{D_2} "l";"x2"};
     {\ar_{D_1} "x1";"k"};
     {\ar^{D_{123}} "x2";"k"};
     {\ar_{D_{123}} "x0";"g1"};
     {\ar_{D_{23}} "g1";"g2"};
     {\ar_{D_2} "g2";"x1"};
    \endxy};
(0,25)* {\CFD(X_{L})};
(50,0)*{
    \xy
     (0,30)*{\xi_0}="x0";
     (30,30)*{\xi_1}="x1";
     (30,0)*{\xi_2}="x2";
     (30,15)*{\kappa}="k";
     (15,30)*{\lambda}="l";
     (15,5)*{\mu_1}="g1";
     (5,15)*{\mu_2}="g2";
     {\ar_{D_3} "x1";"l"};
     {\ar_{D_2} "l";"x0"};
     {\ar^{D_1} "x1";"k"};
     {\ar_{D_{123}} "x2";"k"};
     {\ar^{D_3} "x2";"g1"};
     {\ar^{D_{23}} "g1";"g2"};
     {\ar_{D_1} "x0";"g2"};
    \endxy};
(50,25)* {\CFD(X_{R})};
\endxy
\]
According to Theorem \ref{thm:identity}, $\CFA(X_{R})$ is as follows (using capital Greek letters to avoid confusion when we take tensor products below):
\[
\xy
 (0,40)*{\Xi_0}="X0";
 (40,40)*{\Xi_1}="X1";
 (40,0)*{\Xi_2}="X2";
 (40,20)*{\Kappa}="K";
 (20,40)*{\Lambda}="L";
 (20,5)*{\Mu_1}="G1";
 (5,20)*{\Mu_2}="G2";
 {\ar_{\rho_1} "X1";"L"};
 {\ar_{\rho_2} "L";"X0"};
 {\ar^{\rho_3} "X1";"K"};
 {\ar_{\rho_3, \rho_2, \rho_1} "X2";"K"};
 {\ar^{\rho_1} "X2";"G1"};
 {\ar^{\rho_2, \rho_1} "G1";"G2"};
 {\ar_{\rho_3} "X0";"G2"};
 {\ar_{\rho_{23}} "L";"G2"};
 {\ar^{\rho_{123}} "X1";"G2"};
 {\ar@/_1.5pc/_{\rho_{12}} "X1";"X0"};
 {\ar@/_1pc/_{\rho_{12},\rho_1} "X2";"G2"};
\endxy
\]

We may use these results to compute the tensor product complexes $\CFA(X_{R}) \boxtimes \CFD(X_{L})$ and $\CFA(X_{R}) \boxtimes \CFD(X_{R})$, illustrated in Figures \ref{fig:RHTLHT} and \ref{fig:RHTRHT}. In each of these figures, the two homology classes provided by the proof of Theorem \ref{thm:main} are indicated in boldface.

From these complexes, it is easy to verify that
\[
\dim \HF(Y(R, L)) = \dim H_*(\CFA(X_{R}) \boxtimes \CFD(X_{L}) ) = 9
\]
and
\[
\dim \HF(Y(R, R)) = \dim H_*(\CFA(X_{R}) \boxtimes \CFD(X_{R}) ) = 7.
\]
Since $Y(L,L) = -Y(R, R)$, we also have
\[
\dim \HF(Y(L, L)) = 7.
\]
The reader is encouraged verify these results in another way by computing $\CFA(X_{L})$ and evaluating its box tensor product with $\CFD(X_L)$ and $\CFD(X_R)$.

\begin{figure}
\[
\xy
 (-7,7)*{\Xi_0 \xi_1}="X0x1";
 (-7,-7)*{\Xi_0 \xi_2}="X0x2";
 (7,-7)*{\Xi_0 \xi_0}="X0x0";
 (40,10)*{\Lambda \mu_2}="Lm2";
 (30,0)*{\Lambda \kappa}="Lk";
 (40,-10)*{\Lambda \lambda}="Ll";
 (50,0)*{\Lambda \mu_1}="Lm1";
 (73,7)*{\Xi_1 \xi_1}="X1x1";
 (73,-7)*{\Xi_1 \xi_2}="X1x2";
 (87,-7)*{\Xi_1 \xi_0}="X1x0";
 (80,-30)*{\Kappa \mu_2}="Km2";
 (70,-40)*{\Kappa \kappa}="Kk";
 (80,-50)*{\Kappa \lambda}="Kl";
 (90,-40)*{\bm{\Kappa \mu_1}}="Km1";
 (73,-73)*{\Xi_2 \xi_1}="X2x1";
 (73,-87)*{\Xi_2 \xi_2}="X2x2";
 (87,-87)*{\bm{\Xi_2 \xi_0}}="X2x0";
 (40,-60)*{\Mu_1 \mu_2}="M1m2";
 (30,-70)*{\Mu_1 \kappa}="M1k";
 (40,-80)*{\Mu_1 \lambda}="M1l";
 (50,-70)*{\Mu_1 \mu_1}="M1m1";
 (10,-30)*{\Mu_2 \mu_2}="M2m2";
 (0,-40)*{\Mu_2 \kappa}="M2k";
 (10,-50)*{\Mu_2 \lambda}="M2l";
 (20,-40)*{\Mu_2 \mu_1}="M2m1";
 {\ar "X1x1";"Lk"}; 
 {\ar@/^1pc/ "X2x1";"M1k"}; 
 {\ar "Lm2";"X0x1"}; 
 {\ar@/^1pc/ "Ll";"X0x2"}; 
 {\ar@/^0.5pc/ "X1x0";"Kl"}; 
 {\ar@/_1pc/ "X0x0";"M2l"}; 
 {\ar@/_1pc/ "Lm1";"M2m2"}; 
 {\ar "X1x0";"M2m1"}; 
 {\ar "X1x2";"M2k"}; 
 {\ar "X1x2";"M2k"}; 
 {\ar "M1m2";"M2k"}; 
\endxy
\]
\caption{The tensor product complex $\CFA(X_{R}) \boxtimes \CFD(X_{L})$.} \label{fig:RHTLHT}
\end{figure}

\begin{figure}
\[
\xy
 (-7,7)*{\Xi_0 \xi_0}="X0x0";
 (7,7)*{\Xi_0 \xi_1}="X0x1";
 (7,-7)*{\Xi_0 \xi_2}="X0x2";
 (40,10)*{\Lambda \lambda}="Ll";
 (30,0)*{\Lambda \mu_2}="Lm2";
 (40,-10)*{\Lambda \mu_1}="Lm1";
 (50,0)*{\Lambda \kappa}="Lk";
 (73,7)*{\Xi_1 \xi_0}="X1x0";
 (87,7)*{\Xi_1 \xi_1}="X1x1";
 (87,-7)*{\Xi_1 \xi_2}="X1x2";
 (80,-30)*{\Kappa \lambda}="Kl";
 (70,-40)*{\Kappa \mu_2}="Km2";
 (80,-50)*{\Kappa \mu_1}="Km1";
 (90,-40)*{\bm{\Kappa \kappa}}="Kk";
 (73,-73)*{\Xi_2 \xi_0}="X2x0";
 (87,-73)*{\Xi_2 \xi_1}="X2x1";
 (87,-87)*{\bm{\Xi_2 \xi_2}}="X2x2";
 (40,-60)*{\Mu_1 \lambda}="M1l";
 (30,-70)*{\Mu_1 \mu_2}="M1m2";
 (40,-80)*{\Mu_1 \mu_1}="M1m1";
 (50,-70)*{\Mu_1 \kappa}="M1k";
 (10,-30)*{\Mu_2 \lambda}="M2l";
 (0,-40)*{\Mu_2 \mu_2}="M2m2";
 (10,-50)*{\Mu_2 \mu_1}="M2m1";
 (20,-40)*{\Mu_2 \kappa}="M2k";
 {\ar@/^0.5pc/ "X1x1";"Lk"}; 
 {\ar "X1x0";"Lm2"}; 
 {\ar@/_1pc/ "X2x1";"M1k"}; 
 {\ar@/^1pc/ "X2x0";"M1m2"}; 
 {\ar@/_1pc/ "Ll";"X0x0"}; 
 {\ar@/_1pc/ "X1x1";"Kl"}; 
 {\ar@/^0.5pc/ "X1x2";"Km1"}; 
 {\ar@/^1pc/ "X0x1";"M2l"}; 
 {\ar@/_1pc/ "X0x2";"M2m1"}; 
 {\ar@/^1pc/ "Lm1";"M2m2"}; 
 {\ar "X1x2";"M2k"}; 
 {\ar "M1l";"M2m2"}; 
 {\ar "X2x1";"Km2"}; 
\endxy
\]
\caption{The tensor product complex $\CFA(X_{R}) \boxtimes \CFD(X_{R})$.} \label{fig:RHTRHT}
\end{figure}

\section{Future Directions}

We conclude by discussing the prospects for generalizing Theorem \ref{thm:main} to manifolds obtained by splicing knots in arbitrary homology spheres, which would prove Conjecture \ref{conj:torus}. If $K$ is a knot in a homology sphere $Y$, the proof of Theorem \ref{thm:cfkcfd} given in \cite{LOTBordered} can be adapted to give a description of $\CFD(X_K)$ in terms of $\CFKm(Y,K)$, with multiple unstable chains when $Y$ is not an L-space. However, the structure of the unstable chains depends on the isomorphism induced on homology by a certain chain homotopy equivalence
\[
J\co (C^h, \partial^h) \to (C^v, \partial^v)
\]
that arises in the course of the proof, and this isomorphism is not \emph{a priori} determined merely by $\CFKm(X,K)$. Furthermore, even though $(C^h, \partial^h)$ and $(C^v, \partial^v)$ are filtered chain homotopy equivalent, the map $J$ need not be a filtered chain homotopy equivalence. In particular, an unstable chain may connect a horizontal generator $\eta_0$ and a vertical generator $\xi_0$ with the property that $A(\eta_0) \ne -A(\xi_0)$.

As a result, Propositions \ref{prop:BK-D} through \ref{prop:VK-A} no longer hold when $Y$ is not an L-space. For example, let $Y$ be the manifold obtained by $+1$ surgery on the left-handed trefoil $L$ (i.e., the Brieskorn sphere $-\Sigma(2,3,7)$), and let $K$ be the core of the surgery torus. Note that $X_K = X_L$ as smooth manifolds with boundary, but the parametrization $\phi_K$ differs from $\phi_L$ by a longitudinal Dehn twist. Thus,
\[
\CFD(X_K, \phi_K) \simeq \CFDA(\tau_\lambda^{-1}) \boxtimes \CFD(X_L, \phi_L),
\]
where $\CFDA(\tau_\lambda^{-1})$ is one of the Dehn twist bimodules computed in \cite[Section 10.2]{LOTBimodules}. By evaluating this tensor product and simplifying, the reader may verify that $\CFD(X_K, \phi_K)$ has the following form:
\[
\xy
 (0,30)*{\xi_1}="x1";
 (0,0)*{\xi_2}="x2";
 (10,-10)*{\eta_2}="e2";
 (40,-10)*{\eta_1}="e1";
 (25,15)*{\xi_0}="x0";
 (0,15)*{\kappa}="k";
 (25,-10)*{\lambda}="l";
 (32.5,2.5)*{\mu_1}="m1";
 (12.5,22.5)*{\mu_2}="m2";
 {\ar^{D_3} "e1";"l"};
 {\ar^{D_2} "l";"e2"};
 {\ar_{D_1} "x1";"k"};
 {\ar^{D_{123}} "x2";"k"};
 {\ar_{D_{123}} "e1";"m1"};
 {\ar_{D_2} "m2";"x1"};
 {\ar^{D_{12}} "e2";"x2"};
 {\ar_{D_2} "m1";"x0"};
 {\ar_{D_{123}} "x0";"m2"};
 \endxy
\]
Here, $\eta_0$, $\eta_2$, and $\xi_0$ are the generators of vertical homology, and $\xi_0$, $\xi_1$, and $\xi_2$ are the generators of horizontal homology. The only generator in Alexander grading $-1$ is $\eta_1$. Notice that $D_2 \circ D_{123}(\eta_1)$ and $D_{12} \circ D_2 \circ D_3(\eta_1)$ are both nonzero (and distinct), contrary to Proposition \ref{prop:BK-D}. Furthermore, by Theorem \ref{thm:identity}, the corresponding generator in $\CFA(X_K)$ has outgoing $m_4(\cdot \otimes \rho_3 \otimes \rho_2 \otimes \rho_{12})$ and $m_3(\cdot \otimes \rho_{123} \otimes \rho_2)$ multiplications, contrary to Proposition \ref{prop:BK-A}. Therefore, when $K_1$ and $K_2$ are knots in arbitrary homology spheres, the subgroup
\[
B_{K_1} \otimes B_{K_2} \subset \CFA(X_{K_1}) \boxtimes \CFD(X_{K_2})
\]
does not necessarily survive in homology, unlike in our proof of Theorem \ref{thm:main}. A different strategy will thus be required for a proof of Conjecture \ref{conj:torus}.

\bibliographystyle{amsplain}
\bibliography{bibliography}

\end{document}